\newcommand{\parametermu}{\mu}
\newcommand{\C}{\mathbb{C}}
\newcommand\noop[1]{}
\definecolor{green}{rgb}{0,.5,0}
\newcommand\eps{\varepsilon}
\newcommand\Div{\mathrm{Div\,}}
\newcommand\Id{\mathrm{Id}}
\newcommand\sym{\mathrm{sym}}
\newcommand\loc{\mathrm{loc}}
\newcommand\Lip{\mathrm{Lip\,}}
\newcommand\R{\mathbb{R}}
\newcommand\N{\mathbb{N}}
\newcommand\Z{\mathbb{Z}}
\newcommand\calH{\mathcal{H}}
\newcommand\calA{\mathcal{A}}
\newcommand\calB{\mathcal{B}}
\newcommand\calL{\mathcal{L}}
\newcommand{\be}[1]{\begin{equation} #1 \end{equation}}
\newcommand{\fz}{f_0}
\newcommand{\Hn}{{\mathcal H}^{n-1}}
\newcommand{\cost}{\kappa}
\def\p{\varphi}
\def\R{\mathbb{R}}
\def\Z{\mathbb{Z}}
\def\N{\mathbb{N}}
\def\H{\mathcal{H}}
\def\Div{\textup{div}\,}
\def\wto{\rightharpoonup}
\def\weakto{\rightharpoonup}
\newtheorem{theorem}{Theorem}
\newtheorem{lemma}{Lemma}[section]
\newtheorem{corollary}[lemma]{Corollary}
\newtheorem{proposition}[lemma]{Proposition}
\newtheorem{remark}[lemma]{Remark}
\def\Carre#1#2{\vbox{
		\hrule height .#2pt
		\hbox{\vrule width .#2pt height #1pt \kern #1pt
			\vrule width .#2pt}
		\hrule height .#2pt}}
\begin{document}
\begin{center}
  {\Large Approximation of functions with small jump sets and existence of strong minimizers of Griffith's energy}
\\[5mm]
{\today}\\[5mm]
Antonin Chambolle$^{1}$, Sergio Conti$^{2}$, and Flaviana Iurlano$^{3}$\\[2mm]
{\em $^{1}$ CMAP, \'Ecole Polytechnique, CNRS,\\ 91128 Palaiseau Cedex, France}\\[1mm]
{\em $^{2}$ Institut f\"ur Angewandte Mathematik,
Universit\"at Bonn\\ 53115 Bonn, Germany}\\[1mm]
{\em $^{3}$ Laboratoire Jacques-Louis Lions, Universit\'e Paris 6\\ 75005 Paris, France}\\[3mm]
    \begin{minipage}[c]{0.8\textwidth}
We prove that special functions of bounded deformation with small jump set are close in energy to functions which are smooth in a slightly smaller domain. This permits to generalize the decay estimate by De Giorgi, Carriero, and Leaci to the linearized context in dimension $n$ and to establish the closedness of the jump set for {local} minimizers of the Griffith energy.
    \end{minipage}
\end{center}

\section{Introduction}
The last few years have seen the development of several techniques to approximate a certain class of special functions with bounded deformation ($SBD$). Such functions appear in the mathematical formulation of fracture in the framework of linearized elasticity~\cite{fra-mar,gbd}. Their peculiarity is the structure of the symmetric distributional derivative, which unveils the presence of a regularity zone, where the function admits a symmetric gradient in an approximate sense, and of a singularity zone, where the function jumps. 
The variational problems in such spaces are generally hard to tackle, and
few strong existence results are known with no artificial additional constraint
($L^\infty$ bounds, a priori bounds on the discontinuity set, ...)
We address in this paper the issue of the existence of strong minimizers for
Griffith's model of  brittle fracture in linear elasticity~\cite{Griffith1921}, in the formulation introduced by
Francfort and Marigo in~\cite{fra-mar} (up to lower-order terms, see Theorem~\ref{t:main} below for details).
A scalar simplification of this problem leads to the  Mumford-Shah functional~\cite{mum-shah}, well known in the mathematical literature.
By strong minimizers, we mean
functions defined in an open set, with a closed $(n-1)$-dimensional
discontinuity set.
This class of ``free-discontinuity problems'' has been thoroughly studied, in
the scalar case, in
the 90's and is now well understood; a classical way to the existence of strong
minimizers is to show first the existence of minimizers in the class
$SBV$~\cite{DeGiorgiAmbrosio}
and then, that the jump set of such solutions is
closed~\cite{DegiorgiCarrieroLeaci1989}. The technical point where
the proof of~\cite{DegiorgiCarrieroLeaci1989} (and most subsequent proofs,
see for instance~\cite{MaddalenaSolimini,BucurLuckhaus}) is not easily
transferred is an approximation issue where one needs to show that a
minimizer with almost no jump is close to a smooth minimizer. Hence
the need to develop new approximation methods in such a context.

Indeed, standard methods do not work in a linearized framework, being these based on the identification of bad parts of the function \emph{via} coarea formula and their removal \emph{via} truncation~\cite{DegiorgiCarrieroLeaci1989}. In contrast, results of this kind have important applications,
beyond the already mentioned existence of minimizers for the Griffith's problem in its strong formulation, 
including the integral representation in $SBD$ of functionals with $p$ growth,
or the study of the quasi-static evolution of brittle fracture, {in the $2$d case see respectively \cite{ContiFocardiIurlanoRepr}, \cite{ContiFocardiIurlano_exist}, and \cite{FriedrichSolombrino}.}

The first two authors, together with G.~Francfort
established in \cite{ChambolleContiFrancfort2016}
 a Poincaré-Korn's inequality for functions with $p$-integrable strain and small jump set. Their idea is to estimate the symmetric variation of $u$ on many lines having different orientations and not intersecting the jump set, thus allowing to use the fundamental theorem of calculus along such lines.
 Through a variant of this strategy with restrictions to the planar setting,
the last {two} authors, together with M.~Focardi,
 proved in \cite{ContiFocardiIurlanoRepr} that such functions are in fact Sobolev out of an exceptional set with small area and perimeter, and obtained a Korn's inequality in that class. 
Similar slicing techniques were first used in \cite{Chambolle2004a,Chambolle2004b,iur} to prove density and in \cite{ContiSchweizer2,DolzmannMueller,Kohn} to prove rigidity. 
A different approach, based on the idea of binding the jump heights after suitable
modifications of the jump set and of the displacement field, has been employed by Friedrich in \cite{Friedrich1,Friedrich2,Friedrich3} to prove Poincaré-Korn's, Korn's with a non-sharp exponent, and piecewise Korn's inequalities in the planar case.

On the one hand, the drawback of \cite{ChambolleContiFrancfort2016} is the lack of control on the perimeter of the exceptional set, which prevents good estimates for the strain; nevertheless these can be recovered through a suitable mollification. 
On the other hand, \cite{ContiFocardiIurlanoRepr} and \cite{Friedrich2,Friedrich3} use respectively a scaling argument and geometric constructions which hold in dimension $2$ and do not trivially extend to higher dimensions.

The purpose of this paper is to establish a $n$-dimensional version of the approximation result in \cite{ContiFocardiIurlanoRepr}, {where it is shown that $SBD$ functions with a small jump set can be approximated with $W^{1,p}$ functions. 
Our technique, which is slightly different from \cite{ContiFocardiIurlanoRepr} and other ``classical'' methods already developed for the approximation of ($G$)$SBD$ functions \cite{Chambolle2004a,Chambolle2004b,iur12,ContiFocardiIurlano_densp}, is based on a subdivision of the domain into ``bad'' and ``good'' little cubes (depending on the size of the jump set in each cube) which was first introduced in \cite{ChambolleContiFrancfort2017} and also recently used in \cite{ChambolleCrismale}.} 
Given a function with $p$-integrable strain and small jump set, we first cover the domain by dyadic small cubes which become smaller and smaller close to the boundary, we then identify the good cubes, those which still contain a small amount of jump. The biggest cubes are chosen with size much larger than the measure of total jump, so that they are all good, and give rise to a compact set which covers most of the domain.
In the good cubes the result in \cite{ChambolleContiFrancfort2016} provides a small set and an affine function which is close to the original function out of the exceptional set. In such set we redefine the function as the aforementioned affine function, then we mollify the new function in the good cube in order to gain regularity. Finally our approximations are obtained by taking a partition of unity on the good part and keeping the original function in the rest. Since a large compact is made of good cubes, our approximation is smooth in most of the domain. For more details we refer to Section \ref{s:app}. {Since there are no additional} mathematical difficulties, we prove the approximation in the more general setting $GSBD$, see Section \ref{s:not} for the definition. 

As we mentioned, this result can be
employed to prove the existence of strong minimizers for Griffith's energy, thus extending 
for $p=2$\footnote{{This only holds for $p=2$,} since \cite{ContiFocardiIurlano_exist} relies,
when $p\neq 2$, on integral estimates proved in~\cite{ContiFocardiIurlanoRegularity} which do not scale with the {appropriate} exponent in dimensions larger than 2.}
the result in \cite{ContiFocardiIurlano_exist} in any dimension. 
{Denoting
by $\C$ the ``Hooke's law'' of a linear-elastic material,
that is, $\C$ is a symmetric linear map from $\R^{n\times n}$ to itself
with the properties
\begin{equation}\label{eqassC}
\C(\xi-\xi^T)=0 \text{ and } \C \xi\cdot \xi \ge c_0 |\xi+\xi^T|^2 \text{ for all } \xi\in\R^{n\times n},
\end{equation}
{we obtain} the following result:}
\begin{theorem}\label{t:main}
	Let $\Omega\subset\R^n$ be a bounded Lipschitz set, $g\in L^\infty(\Omega{;\R^n})$, {$\beta>0$, $\cost>0$, $\C$ as in (\ref{eqassC}). }
	Then the functional
	\begin{equation}
	{{E_2}[\Gamma,u]:=\int_{\Omega\setminus\Gamma} (\C e(u):e(u)+\cost|u-g|^2) dx 
		+2\beta \calH^{n-1}(\Gamma\cap\Omega)}
	\end{equation}
	has a minimizer in the class
	\begin{equation}
	{\calA_2}:=\{(u,\Gamma): \Gamma\subset\overline \Omega \text{ closed, } u\in C^1(\Omega\setminus\Gamma)\}.
	\end{equation}
\end{theorem}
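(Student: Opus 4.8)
The plan is to follow the De Giorgi--Carriero--Leaci scheme, with the approximation result of this paper playing the role that scalar truncation and the coarea formula play in the Mumford--Shah case. \emph{Weak existence.} First I would minimize the relaxed functional $G[u]:=\int_\Omega(\C e(u):e(u)+\cost|u-g|^2)\,dx+2\beta\calH^{n-1}(J_u)$ over $u\in GSBD^2(\Omega)$. The infimum is finite ($u=0$ is admissible), and for a minimizing sequence $(u_k)$ the coercivity in (\ref{eqassC}) bounds $\|e(u_k)\|_{L^2}$, while $g\in L^\infty(\Omega)\subset L^2(\Omega)$ together with the fidelity term bounds $\|u_k\|_{L^2}$ and $\calH^{n-1}(J_{u_k})$. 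By the compactness and lower semicontinuity theorems in $GSBD$~\cite{gbd} --- with no exceptional set, since $\|u_k\|_{L^2}$ is bounded --- a subsequence satisfies $u_k\to u$ a.e.\ in $\Omega$, $e(u_k)\wto e(u)$ in $L^2$, $u\in GSBD^2(\Omega)$ and $\calH^{n-1}(J_u)\le\liminf_k\calH^{n-1}(J_{u_k})$; convexity of $\xi\mapsto\C\xi:\xi$ and Fatou handle the remaining two terms, so $u$ minimizes $G$.

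\emph{From the weak minimizer to a strong one.} Grant for the moment the density lower bound stated below. Put $\Gamma:=\overline{J_u}$; this set is closed and contained in $\overline\Omega$. Since $\calH^{n-1}\restr J_u$ is a finite Radon measure that vanishes on $(\overline{J_u}\setminus J_u)\cap\Omega$, its upper $(n-1)$-density is zero $\calH^{n-1}$-a.e.\ there, so the density lower bound forces $\calH^{n-1}((\overline{J_u}\setminus J_u)\cap\Omega)=0$, whence $\calH^{n-1}(\Gamma\cap\Omega)=\calH^{n-1}(J_u)$. On the open set $\Omega\setminus\Gamma$ the function $u$ has no jump, so by the structure of $GSBD$ functions and Korn's inequality $u\in H^1_{\loc}(\Omega\setminus\Gamma)$; testing minimality of $G$ against compactly supported $H^1$ perturbations there, $u$ locally minimizes $v\mapsto\int(\C e(v):e(v)+\cost|v-g|^2)$, hence solves $\Div(\C e(u))=\cost(u-g)\in L^\infty_{\loc}$, and constant-coefficient elliptic regularity (the Legendre--Hadamard condition follows from (\ref{eqassC})) gives $u\in W^{2,p}_{\loc}(\Omega\setminus\Gamma)$ for every $p<\infty$, in particular $u\in C^1(\Omega\setminus\Gamma)$. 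Thus $(u,\Gamma)\in\calA_2$ and, since $J_u$ and $\overline{J_u}\setminus J_u$ are $\Leb^n$-negligible, $E_2[\Gamma,u]=G[u]$. Conversely, any $(v,K)\in\calA_2$ with $E_2[K,v]<\infty$ is $C^1$ outside the closed, $\calH^{n-1}$-finite set $K$, with $e(v)\in L^2$ and (as $g\in L^\infty$) $v\in L^2$, hence lies in $GSBD^2(\Omega)$ with $J_v\subset K$ up to an $\calH^{n-1}$-null set and $\Leb^n(K)=0$, so $G[v]\le E_2[K,v]$. Combining, $\inf_{\calA_2}E_2\le E_2[\Gamma,u]=G[u]=\min_{GSBD^2}G\le\inf_{\calA_2}E_2$, so equality holds throughout and $(u,\Gamma)$ is a minimizer.

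\emph{The density lower bound --- the main obstacle.} It remains to prove that there are $\theta_0,\rho_0>0$ with $\calH^{n-1}(J_u\cap B_\rho(x))\ge\theta_0\rho^{n-1}$ whenever $x\in\overline{J_u}\cap\Omega$ and $\rho<\min(\rho_0,\dist(x,\partial\Omega))$. This is the $n$-dimensional linearized De Giorgi--Carriero--Leaci decay estimate, and it is precisely here that the approximation theorem of this paper is indispensable. I would argue by contradiction: if the bound fails on a compact $K$ with $K\subset\subset\Omega$, choose $x_j\in\overline{J_u}\cap K$ and $\rho_j<\rho_0$ with $\rho_j^{1-n}\calH^{n-1}(J_u\cap B_{\rho_j}(x_j))\to0$; comparing $u$ with the competitor that erases $u$ inside $B_{\rho_j}(x_j)$ at the price of adding a suitable portion of $\partial B_{\rho_j}(x_j)$ to the crack, minimality of $u$ also bounds $\rho_j^{1-n}\int_{B_{\rho_j}(x_j)}\C e(u):e(u)\,dx$, the zero-order term $\cost|u-g|^2$ contributing only a lower-order error (here $g\in L^\infty$ is used, the fidelity term of the competitors being controlled as in~\cite{ContiFocardiIurlano_exist}). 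Rescaling via $u(x_j+\rho_j\,\cdot\,)\mapsto\rho_j^{-1/2}u(x_j+\rho_j\,\cdot\,)$, under which the elastic and surface energies on $B_{\rho_j}(x_j)$ become homogeneous of degree $n-1$, and using that the rescaled jump set is small, the approximation theorem of this paper furnishes, for $j$ large, smooth functions $w_j$ on a slightly smaller ball with $e(w_j)$ equibounded in $L^2$ and differing from the rescaled $u$ only on a set of vanishing measure and perimeter. Replacing $w_j$ by the solution $h_j$ of the constant-coefficient system $\Div(\C e(h_j))=0$ with the boundary datum of $w_j$, invoking the Campanato decay estimate for $h_j$, and using minimality of $u$ to absorb the gluing on the transition annulus together with the approximation errors, one obtains a decay inequality $\mathbf D(x,\tau r)\le\frac12\mathbf D(x,r)+(\text{lower order})$ for $\mathbf D(x,r):=r^{1-n}\int_{B_r(x)}\C e(u):e(u)\,dx$ and a fixed $\tau\in(0,1)$, valid once $\mathbf D(x,r)$ plus the normalized jump is below a universal threshold. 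Iterating yields power decay of $\mathbf D(x,\cdot)$, hence --- via Korn, Campanato, and the vanishing of the jump density --- that $u$ agrees near $x$ with a Hölder-continuous $H^1$ function and $J_u$ is $\calH^{n-1}$-negligible in a neighbourhood of $x$, contradicting $x\in\overline{J_u}$. The delicate points, all served by the cube-decomposition approximation scheme of this paper, are: selecting the competitor so that the lower-order term is genuinely negligible; tracking quantitatively the approximation errors; and performing the gluing on the transition layer in the symmetrized-gradient setting.
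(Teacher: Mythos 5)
Your proposal reproduces the De Giorgi--Carriero--Leaci scheme that the paper follows: weak existence in $GSBD^2$ via \cite[Thm.~11.3]{gbd}, a density lower bound on $J_u$ via a decay estimate in which the $n$-dimensional approximation theorem (Theorem~\ref{th:app}) replaces the scalar truncation/coarea step, and finally $\Gamma:=\overline{J_u}$ together with elliptic regularity on $\Omega\setminus\Gamma$. The only organizational difference is that the paper factors the blow-up/compactness step out as a standalone statement (Theorem~\ref{t:cm}, convergence of quasi-minimizers with vanishing jump set) and then invokes \cite{ContiFocardiIurlano_exist} for the remaining details of the decay and density-lower-bound arguments, whereas your sketch merges these into a single contradiction argument; the substance is the same.
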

\noindent{Here, $e(u)= (Du+Du^T)/2$ is the symmetrized gradient of the
displacement $u$.}

{In \cite{ContiFocardiIurlano_exist}} the only {part of the argument which is restricted to two dimensions} was in fact the convergence of quasi-minimizers with vanishing jump to a minimizer without jump, which is obtained precisely using the $2$d approximation of \cite{ContiFocardiIurlanoRepr}. We show the convergence in Section \ref{s:lim}, making use of the approximation result in Section \ref{s:app}.
We refer to \cite{ContiFocardiIurlano_exist} for the rest of the proof of existence since it remains unchanged {in higher dimension}.

We remark that apparently the formulation of Griffith's problem considered in \cite{ContiFocardiIurlano_exist} and here differs from the original one for the presence of a fidelity term of type $|u-g|^2$, and for the absence of boundary conditions. Both of them have the only role of guaranteeing existence {of a minimizer in $GSBD^p$} for the weak global problem, hence one should employ different compactness and semicontinuity theorems in the two cases. {One uses \cite[Theorem 11.3]{gbd} in the first case; in the second case, in presence of Dirichlet boundary conditions, one uses \cite[Theorem 4.15]{FriedrichSolombrino} in dimension 2. Compactness in the $n$-dimensional $GSBD$ setting in presence of Dirichlet boundary conditions is still an open problem.} {If weak compactness holds, or, in other words, if the weak problem has a minimizer, then the present argument yields the desired regularity. This regularity result also holds in the case $\cost=0$, which coincides with the classical Griffith model.
 \begin{theorem}\label{t:mainreg}
	Let $\Omega\subset\R^n$ be a bounded Lipschitz set, $g\in L^\infty(\Omega;\R^n)$, $\beta>0$, $\cost\ge 0$, $\C$ as in (\ref{eqassC}).
	Let $u\in GSBD^2(\Omega)$ be a local minimizer of 
	\begin{equation}\label{eqgriffintrop}
	\int_{\Omega\setminus J_u} (\C e(u):e(u)+\cost|u-g|^2) dx 
		+2\beta \calH^{n-1}(J_u)
	\end{equation}
	Then, up to null sets, $u$ and $J_u$ coincide with a local minimizer of 
	$E_2[\Gamma,u]$ in the class
	$\calA_2$.
\end{theorem}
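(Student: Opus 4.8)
\emph{Overall strategy.} The plan is to transfer the De Giorgi--Carriero--Leaci regularity scheme~\cite{DegiorgiCarrieroLeaci1989} to the linearized, $n$-dimensional setting, using the approximation theorem of Section~\ref{s:app} and the vanishing-jump limit of Section~\ref{s:lim} as the substitute for the coarea and truncation arguments available only in the scalar case. Once these are in hand, the remaining steps run as in~\cite{ContiFocardiIurlano_exist}, whose sole two-dimensional ingredient was precisely the blow-up compactness recalled below. Throughout, $\C e(u):e(u)\ge 4c_0|e(u)|^2$ by~(\ref{eqassC}), and the fidelity term $\cost|u-g|^2$ (absent when $\cost=0$, bounded otherwise since $|u-g|^2\in L^1_\loc$) contributes at scale $\rho$ only a quantity $o(\rho^{n-1})$ and can be carried through all the iterations below as a lower-order perturbation, exactly as in the scalar case.

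\emph{Ahlfors bound and decay estimate.} First I would establish the elementary upper bound
\begin{equation*}
\int_{B_\rho(x_0)}\C e(u):e(u)\,dx+2\beta\,\calH^{n-1}(J_u\cap B_\rho(x_0))\ \le\ C\rho^{n-1},\qquad B_\rho(x_0)\subset\Omega,
\end{equation*}
by comparing $u$ with the competitor equal to $u$ outside $B_\rho(x_0)$ and $0$ inside: this cuts the jump along $\partial B_\rho(x_0)$, adding $\le C\rho^{n-1}$ to the surface term and $\le C\rho^n$ to the fidelity while killing the elastic energy in $B_\rho(x_0)$, so local minimality yields the bound. The core step is then the decay estimate: writing $\mathbf D(x_0,\rho):=\rho^{1-n}\int_{B_\rho(x_0)}\C e(u):e(u)\,dx$ and $\mathbf A(x_0,\rho):=\rho^{1-n}\calH^{n-1}(J_u\cap B_\rho(x_0))$, there exist $\tau\in(0,\tfrac14)$ and $\eps_0,\theta_0>0$ such that, whenever $\mathbf D(x_0,\rho)+\mathbf A(x_0,\rho)<\eps_0$, one has either $\mathbf A(x_0,\tau\rho)\ge\theta_0$, or $\mathbf D(x_0,\tau\rho)\le\tfrac12\mathbf D(x_0,\rho)+C\rho$. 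I would prove this by contradiction and blow-up: along a sequence $(x_j,\rho_j)$ for which the estimate fails with $\mathbf A(x_j,\tau\rho_j)<\theta_0$, rescale $u$ from $B_{\rho_j}(x_j)$ onto $B_1$, subtract an infinitesimal rigid motion by means of the Poincar\'e--Korn inequality for functions with small jump set of~\cite{ChambolleContiFrancfort2016}, and normalize the elastic energy; the rescaled functions $v_j$ satisfy $\calH^{n-1}(J_{v_j}\cap B_1)\to0$, so by the approximation theorem of Section~\ref{s:app} they are close in energy on $B_{1-\delta}$ to $W^{1,2}$ functions, hence (Rellich) $v_j\to v_\infty$ in $L^2_\loc(B_1;\R^n)$, and the vanishing-jump limit of Section~\ref{s:lim} identifies $v_\infty\in W^{1,2}_\loc(B_1;\R^n)$ as a local minimizer of the purely elastic energy, i.e.\ a weak solution of $\div(\C e(v_\infty))=0$. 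For this constant-coefficient strongly elliptic system the classical interior estimate gives decay of $\mathbf D$ at rate $\tau$, which, transferred back along the energy-convergent approximation, contradicts the failure of the estimate for $j$ large.

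\emph{Density lower bound and closedness.} Iterating the decay estimate as in~\cite{DegiorgiCarrieroLeaci1989,ContiFocardiIurlano_exist} yields the density lower bound: there are $\theta_0,\rho_0>0$ with $\calH^{n-1}(J_u\cap B_\rho(x_0))\ge\theta_0\rho^{n-1}$ for every $x_0\in\overline{J_u}\cap\Omega$ and $0<\rho<\min(\rho_0,\dist(x_0,\partial\Omega))$. Indeed, at a point $x_0$ where the jump density stays below $\theta_0$ at all small scales the second alternative always applies, so $\mathbf D(x_0,\tau^k\rho)\to0$ (modulo the summable $C\rho$-error), and a Campanato estimate forces $u$ to coincide near $x_0$ with a H\"older-continuous function having no jump, whence $x_0\notin\overline{J_u}$. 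Since the measure $A\mapsto\calH^{n-1}(J_u\cap A)$ gives no mass to $\overline{J_u}\setminus J_u$ while the density lower bound holds at each point of $\overline{J_u}\cap\Omega$, a covering argument gives $\calH^{n-1}((\overline{J_u}\setminus J_u)\cap\Omega)=0$. Setting $\Gamma:=\overline{J_u}$, the set $\Gamma$ is closed, $\Gamma\subset\overline\Omega$, and $\calH^{n-1}(\Gamma\cap\Omega)=\calH^{n-1}(J_u)$.

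\emph{Regularity, minimality, and the main obstacle.} On $\Omega\setminus\Gamma$ the jump set $J_u$ is locally empty, so $u\in W^{1,2}_\loc(\Omega\setminus\Gamma;\R^n)$ locally minimizes $v\mapsto\int(\C e(v):e(v)+\cost|v-g|^2)$ and hence solves weakly $\div(\C e(u))=\cost(u-g)$; difference quotients give $u\in W^{2,2}_\loc$, and an $L^p$-bootstrap using $g\in L^\infty$ gives $u\in W^{2,p}_\loc$ for every $p<\infty$, so a representative of $u$ lies in $C^1(\Omega\setminus\Gamma;\R^n)$ and $(u,\Gamma)\in\calA_2$. Finally, if $(v,\Sigma)\in\calA_2$ agrees with $(u,\Gamma)$ outside a compact subset of $\Omega$, then $v\in GSBD^2(\Omega)$ with $J_v\subseteq\Sigma\cap\Omega$, whence
\begin{equation*}
E_2[\Sigma,v]\ \ge\ \int_{\Omega\setminus J_v}\!\big(\C e(v):e(v)+\cost|v-g|^2\big)\,dx+2\beta\,\calH^{n-1}(J_v)\ \ge\ E_2[\Gamma,u],
\end{equation*}
the last step by local minimality of $u$ for~(\ref{eqgriffintrop}) together with $\calH^{n-1}((\Gamma\cap\Omega)\setminus J_u)=0$; thus $(u,\Gamma)$ is a local minimizer of $E_2$ in $\calA_2$, which proves Theorem~\ref{t:mainreg}. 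The main obstacle is the decay estimate, and within it the blow-up step: passing from a $GSBD^2$ minimizer whose rescaled jump is vanishing to a Sobolev solution of the limiting elasticity system. This is exactly what the approximation result of Section~\ref{s:app}, combined with the limit argument of Section~\ref{s:lim}, supplies in arbitrary dimension, in place of the scalar truncation technique that is unavailable here.
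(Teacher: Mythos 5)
Your proposal is correct and follows exactly the route the paper takes: it reproduces the De Giorgi--Carriero--Leaci scheme (Ahlfors upper bound, blow-up decay dichotomy, iterated density lower bound, closedness of $\overline{J_u}$, and elliptic regularity off $\Gamma$), with the approximation result of Section~\ref{s:app} and the vanishing-jump compactness/minimality of Section~\ref{s:lim} supplying precisely the blow-up step that was previously restricted to two dimensions. The paper presents this step chain tersely by deferring the iteration to \cite[Lemmas 3.7--3.8, Theorem 1.2]{ContiFocardiIurlano_exist}, but the argument you spell out is the one being invoked there.
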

By local minimizer we mean here minimizer with respect to perturbations with compact support, i.e., in the class of all $v\in GSBD^2(\Omega)$ such that
$\{u\ne v\}\subset\subset\Omega$.}
  
The structure of the paper is the following. In Section \ref{s:not} we introduce the notation for $GSBD$ functions. In Section \ref{s:app} we state and prove our main result, the approximation in any dimension for functions with $p$-integrable strain and small jump set. In Section \ref{s:lim} we study the limit of quasi-minimizers with vanishing jump sets, which is instrumental to obtain existence of minimizers for Griffith's problem in any dimension. {Finally, in Section \ref{s:exis} we recall from \cite{ContiFocardiIurlano_exist} the main steps of the proof of Theorem \ref{t:main}.} 

\section{{Notation}}\label{s:not}
Fixed $\Omega\subset\R^n$ open and $u\in L^1(\Omega;\R^n)$ one defines the slice $u^\xi_y:\Omega^\xi_y\to\R$ for $\xi\in {\mathbb{S}}^{n-1}$ 
and $y\in\R^n$ by
$u^\xi_y(t)=u(y+t\xi)\cdot\xi$, where $\Omega^\xi_y:=\{t\in\R: y+t\xi\in\Omega\}$.
One also introduces $\Omega^\xi:=(\Id-\xi\otimes \xi)\Omega$, that is the orthogonal projection of $\Omega$ in the direction $\xi$. 

A generalized special function with bounded deformation $GSBD(\Omega)$ {(see \cite{gbd})} is then a $\mathcal{L}^n$-measurable function $u\colon\Omega\to\mathbb{R}^n$ for which there exists a bounded positive Radon measure $\lambda_u\in\mathcal{M}_b^+(\Omega)$
such that the following condition holds for every $\xi\in\mathbb{S}^{n-1}$: 
for $\H^{n-1}$-a.e.\ $y\in\Omega^\xi$ the function {$u^\xi_y(t)$
belongs to $SBV_\loc(\Omega^\xi_y)$} and for every Borel set $B\subset \Omega$ it satisfies
\begin{equation}\label{e:sl}
\int_{{\Omega}^\xi}\Big(|Du^\xi_y|(B^\xi_y\setminus J^1_{u^\xi_y})+\H^0(B^\xi_y\cap J^1_{u^\xi_y})\Big)d\H^{n-1}\leq
\lambda_u(B),
\end{equation}
where $J^1_{u^\xi_y}:=\{t\in J_{u^\xi_y}:|[u^\xi_y](t)|\geq1\}$.

If $u\in GSBD(\Omega)$, the approximate symmetric gradient $e(u)$ and the approximate jump set $J_u$ are well-defined, are respectively integrable and rectifiable, and can be reconstructed by slicing, see \cite{gbd} for details. We refer to \cite{AmbrosioFuscoPallara} for properties of functions of bounded variation $BV$. 

The subspace $GSBD^p(\Omega)$ {contains} all functions in $GSBD(\Omega)$ satisfying $e(u)\in L^p(\Omega;\R^{n{\times} n})$ and $\calH^{n-1}(J_u)<\infty$.
\section{Approximation {of functions with small jump}}\label{s:app}
\subsection{Preliminary results}

{We begin by stating a slight generalization of the Poincar\'e-Korn inequality for functions with small jump set obtained in \cite[Prop.~3]{ChambolleContiFrancfort2016}. }
\begin{proposition}\label{th:prop3b} 
Let $0<\theta''<\theta'<1$, $r>0$. 
Let $Q=(-r,r)^n$,
	$Q'=(-\theta' r,\theta' r)^n$, $p\in [1,\infty)$, $u\in GSBD^p(Q)$.
	\begin{enumerate}
		\item \label{propsobol1}
		There exists a set $\omega\subset Q'$ and an affine
		function $a:\R^n\to\R^n$ with $e(a)=0$ such that
		\begin{equation}\label{eq:stimom}
		|\omega|\le c_* r \H^{n-1}(J_u)
		\end{equation}
		and
		\begin{equation}\label{eq:stimu}
		\int_{Q'\setminus\omega} |u-a|^{np/(n-1)}
		\le c_*r^{n(p-1)/(n-1)}\left(\int_Q |e(u)|^p dx\right)^{n/(n-1)}
		\end{equation}
		\item \label{propsobol2}
		If additionally $p>1$ then there is $\bar p>0$ (depending on $p$ and $n$) such that,
		for a given mollifier $\rho_r\in C^\infty_c(B_{(\theta'-\theta'')r})$,  $\rho_r(x)=r^{-n}\rho_1(x/r)$,
		the function $v=u\chi_{Q'\setminus\omega}
		+a\chi_\omega$ obeys
		\begin{equation*}
		\int_{Q''} | e(v\ast \rho_r)- e(u)\ast\rho_r|^p dx \le c 
		\left(\frac{\H^{n-1}(J_u)}{r^{n-1}}\right)^{\bar p} \int_Q |e(u)|^p dx\,,
		\end{equation*}
		where $Q''=(-\theta'' r,\theta'' r)^n$.
	\end{enumerate}
	The constant in \ref{propsobol1}.~depends only on $p$, $n$ and $\theta'$, the one in
	\ref{propsobol2}.~also on $\rho_1$ and $\theta''$.
\end{proposition}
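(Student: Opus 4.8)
The plan is to obtain part~\ref{propsobol1}.\ from the slicing characterisation of $GSBD$ combined with a Korn-type argument, following closely \cite[Prop.~3]{ChambolleContiFrancfort2016}, and then to deduce part~\ref{propsobol2}.\ by a direct computation exploiting that the modification $u\mapsto v$ is supported on a small set. Throughout we may assume $\mathcal H^{n-1}(J_u)$ small compared to $r^{n-1}$, since otherwise $\omega=Q'$ does the job (then $v=a$ on $Q'$ and both statements are trivial).

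For part~\ref{propsobol1}.\ I would argue by slicing. Fixing a unit vector $\xi$, by \eqref{e:sl} the slices $u^\xi_y$ are, for $\mathcal H^{n-1}$-a.e.\ $y$, one-dimensional $SBV$ functions whose jump set and whose absolutely continuous derivative (equal a.e.\ to $e(u)(y+t\xi)\xi\cdot\xi$) are controlled by $\lambda_u$; hence the set of ``bad'' $y$, those for which the slice over the relevant interval jumps, has $\mathcal H^{n-1}$-measure at most $c\,\mathcal H^{n-1}(J_u)$. Choosing finitely many directions $\xi_1,\dots,\xi_N$ sufficient to reconstruct a symmetric matrix — so that the one-dimensional fundamental theorem of calculus along good lines in these directions, combined with a Korn inequality, pins $u$ down up to an infinitesimal rigid motion $a$ (i.e.\ with $e(a)=0$) — one lets $\omega\subset Q'$ be the union of the cylinders over the bad $y$'s; since orthogonal projection does not increase $\mathcal H^{n-1}$ and each cylinder has length at most $c\,r$, one gets $|\omega|\le c_*r\,\mathcal H^{n-1}(J_u)$. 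For $x\in Q'\setminus\omega$ one controls $u(x)\cdot\xi_k$ by a fixed reference value plus the integral of $|e(u)\xi_k\cdot\xi_k|$ along the good $\xi_k$-line through $x$; a one-dimensional Hardy estimate and Fubini then give $\|u-a\|_{L^p(Q'\setminus\omega)}^p\le c\,r^p\int_Q|e(u)|^p$, which combined with the Gagliardo--Nirenberg--Sobolev inequality applied to $|u-a|^p$ upgrades to \eqref{eq:stimu}. The only differences with \cite{ChambolleContiFrancfort2016} are the passage to $GSBD^p$, transparent since only \eqref{e:sl} is used, and the second parameter $\theta''<\theta'$, which costs nothing beyond placing the reference region inside $Q''$.

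For part~\ref{propsobol2}.\ the construction can, and should, be arranged so that in addition $J_u\cap Q'\subset\omega$; then $u-a\in W^{1,p}_\loc$ on $Q'\setminus\overline\omega$ by \eqref{eq:stimu} and a local Korn inequality. Since $e(a)=0$ one has $v-a=(u-a)\chi_{Q'\setminus\omega}$ and $e(a\ast\rho_r)=0$, whence for $x\in Q''$
\[
e(v\ast\rho_r)(x)-e(u)\ast\rho_r(x)=e\big((u-a)\chi_{Q'\setminus\omega}\ast\rho_r\big)(x)-e(u-a)\ast\rho_r(x),
\]
and this vanishes as soon as $B_{(\theta'-\theta'')r}(x)\cap\omega=\emptyset$, because then $v=u$ is Sobolev on the whole mollification window. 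Hence the error is supported in the $(\theta'-\theta'')r$-neighbourhood of $\omega$, and there, computing the distributional symmetric derivative of the product $(u-a)\chi_{Q'\setminus\omega}$, it splits into a bulk term $\big(e(u)\chi_\omega\big)\ast\rho_r$ and an interface term $\big(\sym((u-a)^{\mathrm{tr}}\otimes\nu)\,\mathcal H^{n-1}\restr\partial\omega\big)\ast\rho_r$. By Young's inequality the first is bounded in $L^p(Q'')$ by $\|e(u)\|_{L^p(\omega)}$, and the second by $c\,r^{-n(p-1)/p}\int_{\partial\omega}|u-a|\,d\mathcal H^{n-1}$, which is controlled via \eqref{eq:stimu} and a trace inequality on $Q'\setminus\omega$. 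It then remains to turn the volume bound $|\omega|\le c_*r\,\mathcal H^{n-1}(J_u)$ into a genuine rate: one exploits the freedom in the construction, running it over a one-parameter family of admissible exceptional sets with small mutual overlap and selecting, by averaging, one for which $\|e(u)\|_{L^p(\omega)}^p$ and the analogous interface quantity are at most $\big(c_*\mathcal H^{n-1}(J_u)/r^{n-1}\big)^{\bar p}\int_Q|e(u)|^p$ for a suitable $\bar p>0$ — it helps here to replace the raw shadows by unions of small cubes, so that also $\mathcal H^{n-1}(\partial\omega)\le c\,|\omega|/r$. Collecting the two contributions gives the estimate.

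The step I expect to be the main obstacle is precisely this last quantitative control of the exceptional set in part~\ref{propsobol2}.: the bare Poincar\'e--Korn construction of part~\ref{propsobol1}.\ only delivers $|\omega|\le c_*r\,\mathcal H^{n-1}(J_u)$, while the mollification estimate needs, with a genuine power of the smallness, both that $e(u)$ not concentrate on $\omega$ and that $\partial\omega$ be tame; these have to be engineered into the choice of $\omega$ — adaptively to $e(u)$, and with a regularised boundary — rather than read off the Poincar\'e--Korn inequality for $u$.
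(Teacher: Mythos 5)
The paper's own ``proof'' of Proposition~\ref{th:prop3b} is essentially a citation: it observes that the statement is \cite[Prop.~3]{ChambolleContiFrancfort2016} (for $\theta'=1/2$, $\theta''=1/4$, $u\in SBD^p$), that the argument there only uses the slicing relations which persist in $GSBD$, and that general $\theta',\theta''$ only change a couple of explicit constants. You instead try to \emph{reconstruct} the proof of the cited result. Your sketch of part~\ref{propsobol1} is recognisably the shadow/FTC/Korn argument of \cite{ChambolleContiFrancfort2016} and is in the right spirit, though the final step is loose: you cannot simply apply Gagliardo--Nirenberg--Sobolev to $|u-a|^p$ on $Q'\setminus\omega$, since $u-a$ is not a Sobolev function there; the $L^{np/(n-1)}$ bound in \eqref{eq:stimu} has to be extracted directly from the one-dimensional control along good lines (this is precisely the content of the slicing argument in the cited reference, not an afterthought).

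For part~\ref{propsobol2} your route (force $J_u\cap Q'\subset\omega$, split $e(v\ast\rho_r)-e(u)\ast\rho_r$ into a bulk term on $\omega$ plus an interface term on $\partial\omega$, then engineer $\omega$ adaptively to $e(u)$ and with tame boundary by an averaging/cube-replacement trick) is a genuinely different decomposition from the one in \cite{ChambolleContiFrancfort2016}, and, as you honestly note, it has a real gap. Concretely, the two requirements you impose on the regularised $\omega$ pull against each other: if $\omega$ is a union of cubes of side $\eps$, then generically $\H^{n-1}(\partial\omega)\sim |\omega|/\eps$, so the bound $\H^{n-1}(\partial\omega)\le c|\omega|/r$ forces $\eps\sim r$, while covering $J_u\cap Q'$ with such cubes and keeping $|\omega|\le c_*r\H^{n-1}(J_u)\ll r^n$ forces $\eps\ll r$. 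Neither this nor the averaging step producing the exponent $\bar p$ is worked out, so the quantitative heart of part~\ref{propsobol2} is missing. Since the paper deliberately defers this step to \cite{ChambolleContiFrancfort2016} and only verifies the $GSBD$ and $\theta',\theta''$ generalisations, the expected answer here is the citation-plus-check the paper gives; your reconstruction attempt, while thoughtful, neither reproduces the cited argument nor closes the gap you identify in your alternative.
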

\begin{proof}
 This result was proven in  Prop.~3 of~\cite{ChambolleContiFrancfort2016} for $\theta'=1/2$, $\theta''=1/4$ and $u\in SBD^p$. 
 
The same proof works in  $u\in GSBD^p(Q)$, since it uses only the properties of slices, which hold also in the generalazed context (see \cite[Theorem 8.1 and 9.1]{gbd} for the slicing formulas for $J_u$ and $e(u)$ in $GBD$).  
At the same time, the proof works for general values of $\theta'$ and $\theta''$ after very minor corrections. Basically, {using the same notation of \cite{ChambolleContiFrancfort2016}}, the explicit bound $\H^1(R_\xi^x)\ge 1$ becomes
 $\H^1(R_\xi^x)\ge 2(1-\theta')$, correspondingly in the definition of $\omega_\xi^*$ one takes $(1-\theta')$ instead of $1/2$. The explicit constant in (3.5) becomes $1/(1-\theta')$, which then propagates (via the unspecified constants ``$c$'', which hereby acquire a dependence on $\theta'$) to the end of the proof. The proof of 2. is unchanged, since it already depends on the choice of the mollifier.
\end{proof}
\begin{remark}
 The statement still holds for $\theta'=1$, but we do not need this here.
\end{remark}

\subsection{{The main result}}
We first show how a $GSBD^p$ function  with (very) small jump set can be well
approximated by a function which is smooth on {a large subset} of the domain. For simplicity we assume that the domain is a cube $Q$, so that the coverings are all explicit, using a Whitney-type argument our proof extends easily to other regular open sets.  The key idea is to cover the domain $Q$ into a large number of small cubes of side length $\delta$, and then to let the decomposition refine towards the boundary. 
Then one applies the Korn-Poincar\'e estimate of Proposition \ref{th:prop3b} to each cube. 
If $\H^{n-1}(J_u)$ is sufficiently small, on a scale set by the length scale $\delta$ (and the constants of Proposition \ref{th:prop3b}), then for each of the  ``interior'' cubes (with side length $\delta$) one finds that the 
exceptional set covers only a small part of the cube. One replaces $u$ by the affine approximant in the exceptional set, mollifies, and then interpolates between neighbouring cubes, to obtain a smooth function with the needed properties. 
A bound on the difference between the affine approximants in neighbouring cubes is obtained by using Proposition \ref{th:prop3b}  on slightly enlarged versions of the cubes.

The boundary layer is however different. Moving towards the exterior boundary, one uses smaller and smaller cubes, and eventually it may happen that the exceptional set covers some cubes entirely. 
Then the rigidity estimate gives no information on the structure of $u$, and our construction cannot be performed.
The union of those cubes, denoted by $\mathcal B$ in the proof, is therefore treated differently:  the function $u$ is left untouched here. This generates a difficulty with the partition-of-unity approach, at the boundary between the two regions. The key idea here is to construct a partition of unity only in $Q\setminus\mathcal B$, with summands that do not have to obey any boundary data on {$\partial^* \mathcal B$} (but vanish as usual on $\partial Q$). Then one can completely separate the construction in $Q\setminus\mathcal B$, which is done by filling the holes and mollifying, from  the one on $\mathcal B$, where $u$ is untouched.
{As a result, we create a small amount of jump, which however remains
in a $\delta$-neighborhood of the original jump set and whose measure
remains controlled. {In 2D, it is possible to perform a similar construction
without any additional jump, see~\cite{ContiFocardiIurlanoRepr}.}}

In order to have good estimates it is as usual necessary to introduce the refinement not close to the (fixed) boundary of $Q$ but instead close to the boundary of a slightly smaller cube, and to work with several coverings with families of cubes (denoted by $q\subset q'\subset q''\subset q'''$) which are slightly enlarged versions of each other, see Figure \ref{figure:grid} for a representation.

Let $f_0(\xi):=\frac 1p\big(\mathbb{C}\xi\cdot\xi\big)^{\sfrac p2}$, for $\xi\in\R^{n{\times} n}_\sym$, with $\C$ obeying \eqref{eqassC}. Let $Q_r:=(-r,r)^n$ and let $Q:=Q_1=(-1,1)^n$.
\begin{theorem}\label{th:app}
There exist $\eta,c$ positive constants
{and a mollifier $\rho\in C_c^\infty(B(0,1);\R_+)$} such that if $u\in GSBD^p(Q_1)$
and
$\delta:=\H^{n-1}(J_u)^{1/n}$ satisfies $\delta<\eta$,
then there exist $R\in({1-\sqrt\delta},1)$, $\tilde u\in GSBD^p(Q_1)$, and $\tilde \omega\subset Q_R\subset\subset Q_1$, such that
\begin{enumerate}
 \item \label{prop:app1}
$\tilde u\in C^\infty({Q_{{1-\sqrt\delta}}})$, $\tilde u=u$ in $Q_1\setminus Q_R$, $\Hn(J_u\cap \partial Q_R)=\Hn(J_{\tilde u}\cap \partial Q_R)=0$;
 \item \label{prop:app2}
$\Hn(J_{\tilde u}\setminus J_u)\leq {c\sqrt\delta}\Hn(J_u\cap (Q_1\setminus Q_{{1-\sqrt\delta}}))$;
 \item \label{prop:app3}
{
It holds
\[
\|e(\tilde u)-\rho_{\delta}*e(u)\|_{L^p(Q_{1-\sqrt{\delta}})}
\le c\delta^s \|e(u)\|_{L^p(Q)}
\]
and for} any open set $\Omega\subset Q$ we have
 \begin{equation*}
 \displaystyle\int_\Omega f_0(e(\tilde u))dx\leq \int_{\Omega_\delta}f_0(e(u))dx+c\delta^s\int_{Q_1}f_0(e(u))dx,
 \end{equation*}
where $\Omega_\delta:=Q\cap \cup_{x\in\Omega} (x+(-3\delta,3\delta)^n)$;
  and $s\in(0,1)$ depends only on $n$ and $p$.
 \item \label{prop:app-u}
$|\tilde \omega|\leq {c\delta}\Hn(J_u\cap Q_R)$ and
 $\displaystyle\int_{Q\setminus \tilde \omega}|\tilde u-u|^pdx\leq c\delta^p\int_Q|e(u)|^pdx$;
 \item \label{prop:app-psif}
If $\psi\in\Lip(Q;[0,1])$, then
\[
\int_Q \psi f_0(e(\tilde u))dx
\le \int_Q \psi f_0(e( u))dx +c\delta^{s}{(1+\Lip(\psi))}\int_Q |e(u)|^p dx.
\]
{\item \label{prop:app-lp}
If in addition $u\in L^p(Q)$, then for $\Omega\subset Q$,
\begin{equation*}
\|\tilde u\|_{L^p(\Omega)}\le \| u\|_{L^p(\Omega)}
+ c\delta^{{\frac{1}{2p}}}(\|u\|_{L^p(Q)}+ \|e(u)\|_{L^p(Q)}).
\end{equation*}
}
\end{enumerate}
The constant $c$ depends on $n$, $p$, and $\C$.
\end{theorem}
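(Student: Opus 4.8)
The plan is to carry out the cube-decomposition strategy announced before the statement: keep $u$ untouched on a thin ``bad'' boundary region, fill the holes in the exceptional set and mollify elsewhere, then glue by a partition of unity defined only away from the bad region. Set $\delta=\Hn(J_u)^{1/n}$. First I would fix $R\in(1-\sqrt\delta,1)$ by an averaging argument over $R$: I want $\Hn(J_u\cap\partial Q_R)=0$ and $u$ to have one-sided approximate limits $\Hn$-a.e.\ on $\partial Q_R$ (both true for a.e.\ $R$), and I also want $\Hn(J_u\cap(Q_R\setminus Q_{R-w}))$, $\int_{Q_R\setminus Q_{R-w}}f_0(e(u))$ and $\|u\|_{L^p(Q_R\setminus Q_{R-w})}$ to be at most $c\,w/\sqrt\delta$ times their values on $Q_1\setminus Q_{1-\sqrt\delta}$, for a width $w=\delta^{(n-s')/(n-1)}$ with $s'=1/(2n)$. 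Then I would decompose $Q_R$ into dyadic cubes which have side $\delta$ in the bulk and refine (Whitney-fashion) to side $\simeq\dist(\cdot,\partial Q_R)$ near the boundary, with concentric enlargements $q\subset q'\subset q''\subset q'''$ of fixed ratios chosen so that $q'\supset q''+B_{\ell_q}$; one keeps $u$ on $Q_1\setminus Q_R$. A cube is \emph{bad} if $\Hn(J_u\cap q''')\ge\delta^{s'}\ell_q^{n-1}$ ($\ell_q$ its side) and \emph{good} otherwise. Since $\Hn(J_u)=\delta^n$, every cube of side $\ge\delta$ is good; hence the union $\mathcal B$ of bad cubes consists of cubes of side $<w$, sits in a shell of width $\lesssim w\ll\sqrt\delta$ about $\partial Q_R$ (so $\mathcal B\cap Q_{1-\sqrt\delta}=\emptyset$), and, summing the perimeters $2n\ell_q^{n-1}\le 2n\delta^{-s'}\Hn(J_u\cap q''')$ and inserting the choice of $R$, obeys $\Hn(\partial^*\mathcal B)\le c\sqrt\delta\,\Hn(J_u\cap(Q_1\setminus Q_{1-\sqrt\delta}))$, which is exactly (2).

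On each good cube I would apply Proposition~\ref{th:prop3b}: this gives $\omega_q\subset q'$ with $|\omega_q|\le c_*\ell_q\Hn(J_u\cap q''')$ and an infinitesimal rigid motion $a_q$ with, by \eqref{eq:stimu} and Hölder, $\|u-a_q\|_{L^p(q'\setminus\omega_q)}\le c\ell_q\|e(u)\|_{L^p(q''')}$. Since the exceptional set produced there contains $J_u\cap q'$, the set $\tilde\omega:=\bigcup_q\omega_q$ contains $J_u\cap Q_R$ and (bounded overlap of the $q'''$) satisfies $|\tilde\omega|\le c\delta\,\Hn(J_u\cap Q_R)$, the first half of (4). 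The decisive point is to use \emph{the same} set on every cube: put $v_q:=u\chi_{q'\setminus\tilde\omega}+a_q\chi_{q'\cap\tilde\omega}$ and $w_q:=v_q*\rho_{\ell_q}$, where the mollifier $\rho\in C^\infty_c(B(0,1);\R_+)$ is chosen \emph{even} (so $a*\rho_r=a$ for affine $a$). Then $w_q$ is smooth on $q''$, $\|w_q-a_q\|_{L^p(q'')}\le\|u-a_q\|_{L^p(q'\setminus\tilde\omega)}\le c\ell_q\|e(u)\|_{L^p(q''')}$ (hence also $\|w_q-u\|_{L^p(q'\setminus\tilde\omega)}\le c\ell_q\|e(u)\|_{L^p(q''')}$), and by Proposition~\ref{th:prop3b}(2) — plus a short comparison estimate to replace $\omega_q$ by $\tilde\omega$ — $\|e(w_q)-\rho_{\ell_q}*e(u)\|^p_{L^p(q'')}\le c(\Hn(J_u\cap q''')/\ell_q^{n-1})^{\bar p}\int_{q'''}|e(u)|^p\le c\,\delta^{s'\bar p}\int_{q'''}|e(u)|^p$. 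For neighbouring good cubes $v_{q_1}-v_{q_2}=(a_{q_1}-a_{q_2})\chi_{\tilde\omega}$ on the overlap, so by the equivalence of $L^p$ norms on the finite-dimensional space of rigid motions over a cube, with a constant uniform over measurable subsets of at least half the cube (proved by scaling and compactness), one gets $\|a_{q_1}-a_{q_2}\|_{L^p(q_1''\cup q_2'')}\le c\ell_q\|e(u)\|_{L^p(\mathrm{nbhd})}$ together with $\|a_{q_1}-a_{q_2}\|_{L^\infty}\le c\ell_q^{-n/p}\|a_{q_1}-a_{q_2}\|_{L^p(q''')}$.

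Next I would take a partition of unity $\{\varphi_q\}$ subordinate to $\{q''\}$ with $\sum_q\varphi_q\equiv1$ on $Q_R\setminus\mathcal B$, $\varphi_q=0$ near $\partial Q$, $\|\nabla\varphi_q\|_\infty\le c/\ell_q$, and \emph{no} condition on $\partial^*\mathcal B$, and set $\tilde u:=\sum_q\varphi_q w_q$ on $Q_R\setminus\mathcal B$, $\tilde u:=u$ on $(Q_1\setminus Q_R)\cup\mathcal B$. Then $\tilde u$ is smooth on $Q_R\setminus\mathcal B\supset Q_{1-\sqrt\delta}$, equals $u$ outside $Q_R$, $\partial Q_R$ is charged neither by $J_u$ nor by $J_{\tilde u}$ (choice of $R$), and the only new jump sits on $\partial^*\mathcal B$: this gives (1)--(2). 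For (4), off $\tilde\omega$ one has $|\tilde u-u|\le\sum_{q'\sim q}|w_{q'}-u|$ with each term controlled on $q''\setminus\tilde\omega\subset q'_{q'}\setminus\tilde\omega$ by $c\ell_{q'}\|e(u)\|_{L^p(q'''_{q'})}$, so $\|\tilde u-u\|_{L^p(Q\setminus\tilde\omega)}\le c\delta\|e(u)\|_{L^p(Q)}$. For (3), since $v_q=u$ off $\tilde\omega$ and $\rho$ is even, $\tilde u=\rho_\delta*u$ on $Q_{1-\sqrt\delta}\setminus(\tilde\omega+B_\delta)$; writing on each $q''$, where $\sum_{q'}\varphi_{q'}=1$, $e(\tilde u)-\rho_\delta*e(u)=\sum_{q'}\varphi_{q'}(e(w_{q'})-\rho_\delta*e(u))+\sum_{q'}\nabla\varphi_{q'}\odot(w_{q'}-w_q)$ (symmetric tensor product), and using $w_{q'}-w_q=(a_{q'}-a_q)\chi_{\tilde\omega}*\rho_\delta$ together with the $L^\infty$ bound above and $|q'''\cap\tilde\omega|\le c\delta\sum_{q''\sim q}\Hn(J_u\cap q''')$, one checks each sum carries a positive power of $\delta$ ($\delta^{s'\bar p/p}$ and $\delta^{1/p}$); summing with bounded overlap yields $\|e(\tilde u)-\rho_\delta*e(u)\|_{L^p(Q_{1-\sqrt\delta})}\le c\delta^s\|e(u)\|_{L^p(Q)}$ for some $s=s(n,p)\in(0,1)$. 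The $f_0$-inequality then comes from convexity: Jensen gives $\int_Af_0(\rho_r*e(u))\le\int_{A+B_r}f_0(e(u))$ \emph{with constant one} (which is why the mollified field is the right reference), the near-boundary good cubes and their radii $\le\delta$ are absorbed by the $3\delta$-enlargement $\Omega_\delta$, on $\mathcal B$ one has $f_0(e(\tilde u))=f_0(e(u))$, and the leftover is bounded via the $p$-growth of $f_0$ by $\|e(\tilde u)-\rho_\delta*e(u)\|_{L^p}$ (and its analogue on the near-boundary good cubes). Estimate (5) is the same computation with the weight inserted and $\|\rho_\delta*\psi-\psi\|_\infty\le\delta\,\Lip(\psi)$. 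Finally, (6) is where $u\in L^p$ enters: $\|\tilde u\|_{L^p(\Omega)}\le\|u\|_{L^p(\Omega)}+\|\tilde u-u\|_{L^p(Q)}$ and $\|\tilde u-u\|_{L^p(\tilde\omega)}$ is estimated through $w_q\approx a_q$, $\|a_q\|_{L^\infty(q')}\le c\ell_q^{-n/p}\|a_q\|_{L^p(q')}$, the sampling bound $\|a_q\|_{L^p(q')}\le c(\|u\|_{L^p(q')}+\ell_q\|e(u)\|_{L^p(q''')})$, $|\omega_q|\le c\ell_q\Hn(J_u\cap q''')$, and — for the small near-boundary cubes — the choice of $R$ making $\|u\|_{L^p}$ on the relevant shell small; the surviving power is $\delta^{1/(2p)}$.

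The three points I expect to be hard are the following. \textbf{(i)} The boundary layer: the partition of unity must live only on $Q\setminus\mathcal B$, so that filling holes in the good region and keeping $u$ on $\mathcal B$ do not interfere, and the jump created on $\partial^*\mathcal B$ must be quantified — this is precisely what forces the delicate averaged choice of $R$, so that $\mathcal B$ lands in a thin shell carrying little jump, little energy, and little $L^p$-mass of $u$. \textbf{(ii)} Getting a \emph{genuine} gain $\delta^s$, not an $O(1)$ error, in (3): naively the interpolation term $\sum_q\nabla\varphi_q\odot(w_q-\cdot)$ is only $O(\|e(u)\|_{L^p})$, and the gain only appears because one uses a single set $\tilde\omega$ on all cubes (so neighbour differences see only the controlled $a_{q_1}-a_{q_2}$), the discrepancy then localizes to $\tilde\omega$ with $|q'''\cap\tilde\omega|\lesssim\delta\sum\Hn(J_u\cap q''')$, and the affine pieces obey $L^\infty$-bounds — the interplay of these is the technical heart. \textbf{(iii)} The bookkeeping: every estimate is assembled cube by cube and summed using the bounded overlap of $q,q',q'',q'''$, and one must keep the constant in front of $\int_{\Omega_\delta}f_0(e(u))$ equal to one (hence the systematic use of Jensen on the mollified field) while all other contributions stay of lower order in $\delta$.
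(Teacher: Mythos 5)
Your overall strategy matches the paper's: refine the cube decomposition towards $\partial Q_R$, classify cubes as good/bad by the local jump measure, keep $u$ on the bad set $\mathcal B$, fill-and-mollify on good cubes, glue with a partition of unity defined only on $Q_R\setminus\mathcal B$, and track cross-cube errors via the rigid motions $a_q$. Using a single global $\tilde\omega=\bigcup_q\omega_q$ in all the $v_q$ (rather than the paper's per-cube $\omega_q$) is a harmless variant — on an overlap $v_i-v_j=(a_i-a_j)\chi_{\tilde\omega}$ instead of being supported on $\omega_i\cup\omega_j$, but both feed the same rigid-motion comparison — though note that Proposition~\ref{th:prop3b} does not actually assert $\omega\supset J_u$, and if $\tilde\omega\ne\omega_q$ then part~\ref{propsobol2}.\ must be supplemented by the comparison you flag. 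Two of your steps, however, contain genuine gaps.

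First, the good/bad threshold. The paper uses a \emph{fixed} small constant $\eta=1/(2\cdot 8^nc_*)$: $q$ is bad if $\Hn(J_u\cap q''')>\eta\delta_q^{n-1}$. This makes $|\omega_q|\le c_*\eta\delta_q^n\le\tfrac12 8^{-n}|q|$, which is what the affine comparison on overlaps needs, and it yields $\Hn(\partial^*\mathcal B)\le(C/\eta)\Hn(J_u\cap(\text{crown}))$ with a \emph{constant} prefactor; the $\sqrt\delta$ in~\ref{prop:app2}.\ then comes purely from the averaged choice of the crown. You instead use the $\delta$-dependent threshold $\delta^{s'}\ell_q^{n-1}$, which costs a factor $\delta^{-s'}$ in the $\partial^*\mathcal B$ bound and which you try to regain from a shell of width $w=\delta^{(n-s')/(n-1)}$. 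The chain breaks at ``every cube of side $\ge\delta$ is good, \emph{hence} $\mathcal B$ consists of cubes of side $<w$'': what you have shown is only that bad cubes have side $<\delta$, and $w<\delta$. Bad cubes of side in $[w,\delta)$ are not excluded; there are only $O(1)$ of them (each such $q'''$ must contain essentially all of $J_u$), but even a bounded number of cubes of side $\sim\delta$ contributes $\sim\delta^{n-1}$ to $\Hn(\partial^*\mathcal B)$, which is not controlled by $\sqrt\delta\,\Hn(J_u\cap(Q_1\setminus Q_{1-\sqrt\delta}))$ when the latter is small. Reverting to a fixed threshold, as in the paper, removes the problem.

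Second, ``$\partial Q_R$ is charged neither by $J_u$ nor by $J_{\tilde u}$ (choice of $R$)'' is an assertion, not a proof. Choosing $R$ so that $\Hn(J_u\cap\partial Q_R)=0$ and $u$ has one-sided approximate limits $\Hn$-a.e.\ on $\partial Q_R$ does not by itself guarantee that the inner trace of $\tilde u$ on $\partial Q_R$ coincides with that of $u$, since $\tilde u$ is built from mollifications at scales $\ell_q\to0$ towards $\partial Q_R$ and $\tilde u\neq u$ on $\tilde\omega$. The paper proves this trace matching by estimating, for $\Hn$-a.e.\ $y\in\partial Q^{i_0}$ and every $\eps>0$, $|\{|\tilde u-u|>\eps\}\cap B_{\delta_k}(y)|/\delta_k^n$ using $|\tilde\omega\cap B_{\delta_k}(y)|$ and the $L^p$-bound on $\tilde u-u$ off $\tilde\omega$, together with the $\Hn$-a.e.\ Lebesgue-point and zero-density conditions secured when translating the grid. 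That argument is needed and should not be omitted.
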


\begin{proof} 
We let $\eta:=1/(2\cdot 8^n c_*)$, where $c_*$ is the constant entering (\ref{eq:stimom}). We can assume without loss of generality that $c_*\ge 1$.

Let $N:=[1/\delta]$, so that $(-N\delta,N\delta)^n\subseteq Q$.
For $i=0,\dots,N-1$ we let $Q^i:=(-(N-i)\delta,(N-i)\delta)^n$
and $C^i:=Q^i\setminus Q^{i+1}$ ($C^{N-1}=Q^{N-1}$). Up to a small translation of the $Q^i$'s we can assume that $J_u$ does not intersect the boundaries $\partial Q^i$ and that almost every point $y\in \partial Q^{i}$ 
is a Lebesgue point for $e(u)$, in the sense that
\begin{eqnarray}\label{eq:nochargei}
&\H^{n-1}(J_u\cap\partial Q^{i})=0, \\
& \displaystyle\lim_{r\to0}\fint_{B_{r}(y)}|e(u)-e(u)(y)|^pdx=0,\qquad\Hn\textrm{-a.e. }y\in\partial Q^i,
\end{eqnarray}
for every $i=0,\dots,N-1$. 

We choose {$i_0\in \N\cap[1,{1/\sqrt\delta-3}]$}
such that (see Lemma \ref{lemmachoice}, and observe that
$(N-[1/\sqrt{\delta}]+1)\delta\ge 1-\sqrt{\delta}$) we have both
\begin{equation}\label{eq:estimcrown}
\begin{cases}\displaystyle
\int_{C^{i_0}\cup C^{i_0+1}} |e(u)|^pdx   \le {8}\sqrt{\delta}\int_{Q\setminus Q_{{1-\sqrt\delta}}} |e(u)|^p dx
\\[2mm]
\displaystyle
\H^{n-1}(J_u\cap (C^{i_{0}}\cup C^{i_{0}+1}))
\le {8}\sqrt{\delta}\H^{n-1}(J_u\cap (Q\setminus Q_{{1-\sqrt{\delta}}}))
\end{cases}
\end{equation}
{if $\delta$ is small enough.}

\begin{figure}
\centerline{\includegraphics[width=6cm]{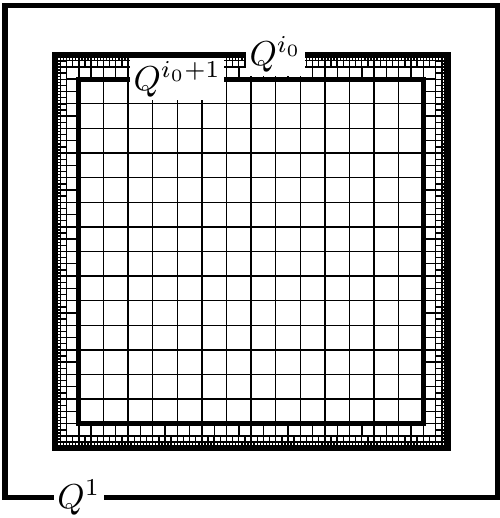} \hskip5mm \includegraphics[width=8cm]{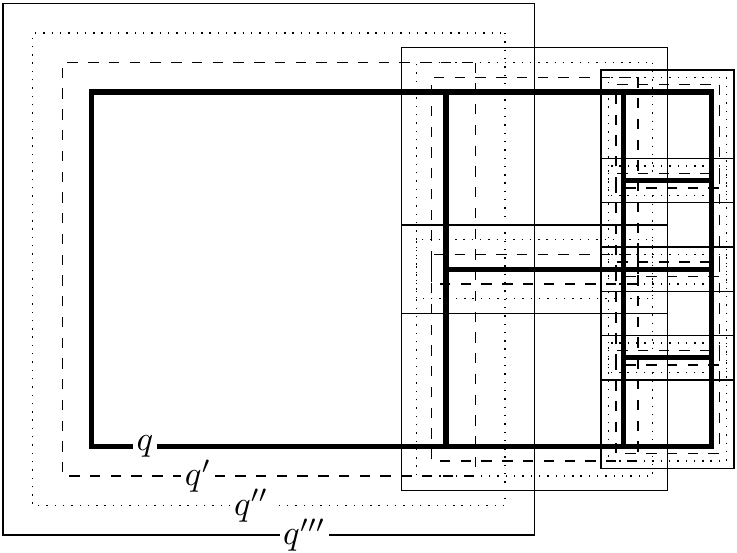}}
 \caption{Sketch of the decomposition of {$Q^{i_0}$} into disjoint squares. Left: global decomposition. The refinement takes place in $C^{i_0}=Q^{i_0}\setminus Q^{i_0+1}$. Right: blow-up showing a few of the cubes $q$ and the corresponding enlarged cubes $q'$, $q''$, $q'''$. See text.}
 \label{figure:grid}
\end{figure}

Next, we cover $Q^{i_0}$ by disjoint cubes, up to a null set. First we subdivide
$Q^{i_0+1}$ into cubes $z+(0,\delta)^n$, $z\in \delta\Z^n$. Then we divide the crown $C^{i_0}$ into dyadic
slabs 
\[
S_k := ( -(N-i_0-2^{-k})\delta,(N-i_0-2^{-k})\delta)^n
\setminus
( -(N-i_0-2^{-k+1})\delta,(N-i_0-2^{-k+1})\delta)^n,
\]
$k=1,\dots,\infty$ and each slab $S_k$ into $\sigma_k$
cubes of the type $z+(0,\delta 2^{-k})^n$, $z\in 2^{-k} \delta\Z^n$,
so that
$\sigma_k\le C 2^{k(n-1)}/\delta^{n-1}$. Here and henceforth $C$ will denote a dimensional constant.
We denote by $\mathcal{W}$ the collection of these cubes
and $\mathcal{W}_0\subset\mathcal{W}$ the cubes of size $\delta$, which cover
the central cube $Q^{i_0+1}$ (see Figure \ref{figure:grid}).
For $q\in\mathcal{W}$, we
let $q'$, $q''$, $q'''$, be respectively the cubes
with same center and dilated by $7/6$, $4/3$, $3/2$.
In particular, the cubes $q'''$ have a finite overlap; also, one has
\[
\bigcup_{q\in \mathcal{W}\setminus \mathcal{W}_0} q'''\subset C^{i_0}\cup C^{i_0+1}.
\]
Given $q\in\mathcal{W}$, we say that $q$ is ``good'' if
\begin{equation}\label{eq:goodcube}
\H^{n-1}(q'''\cap J_u) \le \eta\delta_q^{n-1}
\end{equation}
where $\delta_q$ is the size of the edge of the cube ($\delta_q=\delta$
if $q\in\mathcal{W}_0$ and
$\delta 2^{-k}$ if $q\subseteq S_k$, $k\ge 1$). We say that $q$ is ``bad''
if~\eqref{eq:goodcube} is not satisfied. Observe in particular
that since by definition, $\delta^n=\H^{n-1}(J_u)\le \eta\delta^{n-1}$, 
each $q\in \mathcal{W}_0$ is good. On the other hand, 
there are at most (since the cubes $q'''$ can overlap with their
neighbours)
\[C(2^{k(n-1)}/\delta^{n-1})\H^{n-1}(J_u\cap (S_{k-1}\cup S_k\cup S_{k+1}))/\eta\]
bad cubes in the slab $S_k$, with a  total perimeter  bounded by
\[
\frac{C}{\eta}\H^{n-1}(J_u\cap (S_{k-1}\cup S_k\cup S_{k+1})).
\]
Hence the 
total perimeter of the union $\mathcal{B}$  of the bad cubes is bounded
by 
\begin{equation}\label{eq:newjumpset}
\H^{n-1}(\partial^*\mathcal{B})\le \frac{C}{\eta}\H^{n-1}(J_u\cap (C^{i_0}\cup C^{i_0+1}))
\le 
\frac{C}{\eta}\sqrt{\delta}\H^{n-1}(J_u\cap (Q\setminus Q_{{1-\sqrt{\delta}}}))
\end{equation}
thanks to~\eqref{eq:estimcrown}. Observe also that $\mathcal{B}\subset C^{i_0}$,
with 
\[
 |\mathcal{B}| \le C\frac{\delta^{3/2}}{\eta}\H^{n-1}(J_u\cap (Q\setminus Q_{{1-\sqrt{\delta}}})).
\]
We enumerate the good cubes, denoted $(q_i)_{i=1}^\infty$, {and we assume that $\mathcal W_0=\bigcup_{i\leq N_0} q_i$ for some $N_0\in \N$}.
We construct a partition of unity associated to the ``good'' cubes $(q_i)$, so that
 $\p_i\in C^\infty(Q^{i_0}\setminus \overline{\mathcal{B}};[0,1])$,
 $\p_i=0$ on $Q^{i_0}\setminus q'_i\setminus {\mathcal{B}}$, 
 $|\nabla \p_i|\le C/\delta_{q_i}$ and $\sum_i \p_i=1$ on $Q^{i_0}\setminus\mathcal{B}$, the sum being locally finite.
To do this, we first choose for each $q_i'$ a function $\tilde \varphi_i\in C^\infty_c(q'_i;[0,1])$ with 
$\tilde{\varphi}_i=1$ on $q_i$ and $|\nabla\tilde{\varphi}_i|\leq {C}/\delta_{q_i}$, then in $Q^{i_0}\setminus \mathcal{B}$ we set $\varphi_i:=\tilde\varphi_i/(\sum_j\tilde{\varphi_j})$. 
{We recall that the sum runs over all good cubes and} stress that the functions $\p_i$ are not defined in $\mathcal B$.

Thanks to Prop.~\ref{th:prop3b},
for each good cube $q_i$ one can find a set $\omega_i\subset q''_i$ with
$|\omega_i|\le c_*\delta_{q_i}\H^{n-1}(J_u\cap q'''_i)
{\le c_*\eta\delta_{q_i}^{n}}$, and an affine function
$a_i$ with $e(a_i)=0$ such that
\begin{equation}\label{eq:ccf}
\int_{q''_i\setminus\omega_i}|u-a_i|^p{dx}\le c \delta_{q_i}^p \int_{q'''_i} |e(u)|^p dx,
\end{equation}
\begin{equation}\label{eq:stimi2}
\int_{q''_i\setminus\omega_i} |u-a_i|^{np/(n-1)}{dx}
\le c\delta_{q_i}^{n(p-1)/(n-1)}\left(\int_{q'''_i} |e(u)|^p dx\right)^{n/(n-1)}.
\end{equation}
Moreover, given a symmetric mollifier $\rho$ with support in $B_{1/6}$,
if one lets
\begin{equation}
u_i := \rho_{\delta_{q_i}}*(u\chi_{q''_i\setminus\omega_i} + a_i\chi_{\omega_i}),
\end{equation}
then
\begin{equation}\label{eq:estimmolif}
\int_{q'_i} |e(u_i)-e(u)*\rho_{\delta_{q_i}}|^p dx\le
c\left(\frac{\H^{n-1}(J_u\cap q'''_i)}{\delta_{q_i}^{n-1}}\right)^{\bar p}
\int_{q'''_i} |e(u)|^p dx
\end{equation}
where $c$ depends on $\rho,n,p$.
{Observe in addition that (the mollifier being symmetric,
one has $\rho_{\delta_{q_i}}*a_i=a_i$):
\begin{multline}\label{eq:stimi3}
\int_{q'_i} |u_i-a_i|^p dx = \int_{q'_i}|\rho_{\delta_{q_i}}*((u-a_i){\chi_{q''_i\setminus \omega_i}})|^p dx 
\\\le
\int_{q''_i\setminus\omega_i} |u-a_i|^p dx \le
c \delta_{q_i}^p\int_{q'''_i} |e(u)|^p dx
\end{multline}
thanks to~\eqref{eq:ccf}.
}

Notice that if $q_i$ and $q_j$ are touching,
one can estimate the distance between $a_i$ and $a_j$.
Using (\ref{eq:ccf}) on the two squares $q_i''$ and $q''_j$ we find
\begin{multline*}
\|a_i-a_j\|_{L^{p}(q''_i\cap q''_j\setminus (\omega_i\cup\omega_j))}
\le\|a_i-u\|_{L^{p}(q''_i\setminus \omega_i)}+
\|a_j-u\|_{L^{p}(q''_j\setminus \omega_j)}\\
\le c \delta_{q_i} \|e(u)\|_{L^p(q_i''')} + c \delta_{q_j} \|e(u)\|_{L^p(q_j''')}.
\end{multline*}
We now observe that by the properties of the grid, if $q''_i\cap q''_j\ne \emptyset$ then necessarily 
$|q''_i\cap q''_j|\ge 4^{-n}\max\{|q_i|, |q_j|\}$ (the critical case is the one with two squares whose side lengths differ by a factor of 2, and which share a corner).
By the choice of $\eta$, since $q_i$ and $q_j$ are good we obtain $|\omega_i|\le \frac12 8^{-n}|q_i|$, and the same for $j$. 
Therefore
$|\omega_i\cup\omega_j|\le |q''_i\cap q''_j|/4$. 
Since $a_i-a_j$ is affine 
(see for example \cite[Lemma 4.3]{ContiFocardiIurlano2016}, which generalizes immediately to parallelepipeds)
\begin{equation}\label{eq:stimij}
\|a_i-a_j\|_{L^{np/(n-1)}(q''_i\cap q''_j)}\le c\delta_{q_i}^{(p-1)/p}\|e(u)\|_{L^p(q'''_i\cup q'''_j)}.
\end{equation}
This is the point which motivates the choice of $\eta$.

We then define
\begin{equation*}
 \tilde u = 
 \begin{cases}
 \sum_i u_i\p_i  & \text{ in $Q^{i_0}\setminus\mathcal B$}\\
 u & \text{ in $\mathcal B\cup (Q\setminus Q^{i_0})$.}
 \end{cases}
\end{equation*}
and observe that  $\tilde u$ is smooth in $Q^{i_0}\setminus\overline{\mathcal B}$, with
\begin{equation}\label{eq:1}
e(\tilde u) = \sum_i \p_ie(u_i)+\sum_i \nabla \p_i\odot u_i. 
\end{equation}
Since $\sum_i\nabla \p_i = 0$, we write $\nabla \p_i=-\sum_{j\sim i}\nabla \p_j$
(where $i\sim j$ if $i\ne j$ and $q'_i\cap q'_j\neq \emptyset$).
Hence for each $x\in q'_i$,
\begin{equation}\label{eq:pointbound}
\sum_l(\nabla\p_l\odot u_l)(x)=(\nabla\p_i\odot u_i)(x) +
\sum_{j\sim i} (\nabla\p_j\odot u_j)(x) 
= \sum_{j\sim i}(\nabla\p_j\odot (u_j-u_i))(x) .
\end{equation}

Let now $\Omega\subset Q$ be open, and define 
$\Omega_\delta$ as a $3\delta$-neighbourhood in the $\|\cdot\|_\infty$ norm, in the sense that
\begin{equation*}
 \Omega_\delta := \{x\in Q: \exists y\in \Omega, |x_i-y_i|<3\delta \text{ for } i=1,\dots, n\} =
 Q\cap\bigcup_{x\in \Omega} (x+3(-\delta,\delta)^n).
\end{equation*}
The key property of this neighbourhood, which motivates the choice of the factor $3$, is the fact that {if} $q_i'\cap \Omega\ne\emptyset$ 
then $q_i'''\subset\Omega_\delta$ and, additionally, for any $j$ with $q_i\sim q_j$ one has $q_j'''\subset\Omega_\delta$.
In the following, all constants will not depend on the choice of $\Omega$.

{We first introduce
$\tilde Q := Q^{i_0+1}\setminus \bigcup_{i>N_0} q'_i$
and start with estimating~\eqref{eq:1} in $\tilde Q$, where
all mollifications are at scale $\delta$.
In particular, if $x\in\tilde Q$,
then all cubes $q_j$ appearing in the right-hand side of~\eqref{eq:pointbound}
(with a non-vanishing term) are of size $\delta$.
}
 
For any two good cubes $q_i\sim q_j$, $i,j\le N_0$, we have
\begin{multline*}
\|u_i-u_j\|_{L^p(q'_i\cap q'_j)}=
\|\rho_{\delta}*(u\chi_{q''_i\setminus \omega_i}+a_i\chi_{\omega_i}-
u\chi_{q''_j\setminus \omega_j}-a_j\chi_{\omega_j})\|_{L^p(q'_i\cap q'_j)}
\\
\le
\|
(u-a_i)\chi_{\omega_j\setminus \omega_i}
-(u-a_j)\chi_{\omega_i\setminus \omega_j}
+ (a_i-a_j)\chi_{\omega_i\cup \omega_j}
\|_{L^p(q''_i\cap q''_j)}
\\\le
\|u-a_i\|_{L^{\frac{pn}{n-1}}(q''_i{\setminus \omega_i})}|\omega_j|^{\frac{1}{np}}
+\|u-a_j\|_{L^{\frac{pn}{n-1}}(q''_j{\setminus \omega_j})}|\omega_i|^{\frac{1}{np}}
\\+\|a_i-a_j\|_{L^{\frac{pn}{n-1}}(q''_i\cap q''_j)} |\omega_i\cup\omega_j|^{\frac{1}{np}}.
\end{multline*}

{Recalling~\eqref{eq:stimi2}, \eqref{eq:stimij}, \eqref{eq:pointbound}, {and $|\nabla \varphi_i|\leq c/\delta$,} we obtain
\begin{equation*}
\left\|\sum_i\nabla\p_i\odot u_i\right\|_{L^p(\Omega\cap\tilde Q)}
\le c 
\sum_{i: q_i'''\subset \Omega_\delta, i\le N_0}
{\left(\frac{|\omega_i|^{1/n}}{\delta}\right)^{1/p}}
\sum_{j\sim i,j\le N_0} \|e(u)\|_{L^p(q_j''')}.
\end{equation*}
}
Since 
$|\omega_i|\le c_*\delta^{n+1}$ for $i\le N_0$, we see that the factor is bounded by {$\delta^{1/(np)}$} and 
\begin{equation}
\left\|{\sum_{i=1}^{N_0} }\nabla \p_i\odot u_i\right\|_{L^p(\Omega\cap \tilde Q)}
\le c{\delta^{1/(np)}}\|e(u)\|_{L^p(\Omega_\delta{\cap Q^{i_0}})}.
\label{eq:1right}
\end{equation}

{It follows from~\eqref{eq:1} and~\eqref{eq:1right} that
\begin{multline}\label{eq:40}
\|e(\tilde u)-\rho_\delta*e(u)\|_{L^p(\Omega\cap \tilde Q)}
\\ \le \left\|\sum_{i=1}^{N_0} \p_i(e(u_i)-e(u)*\rho_\delta)\right\|_{L^p(\Omega\cap\tilde Q)} 
+
c{\delta^{1/(np)}}\|e(u)\|_{L^p(\Omega_\delta {\cap Q^{i_0}})}
\\ \le
\sum_{i\le N_0, q_i'''\subset\Omega_\delta}\|e(u_i)-e(u)*\rho_\delta\|_{L^p(q'_i)} +
c{\delta^{1/(np)}}\|e(u)\|_{L^p(\Omega_\delta {\cap Q^{i_0}})}
\\ \le
 c\sum_{i\le N_0, q_i'''\subset\Omega_\delta}\left(\frac{\H^{n-1}(J_u\cap q'''_i)}{\delta^{n-1}}\right)^{\frac{\bar p}{p}}
\|e(u)\|_{L^p(q'''_i)}
+c{\delta^{1/(np)}}\|e(u)\|_{L^p(\Omega_\delta {\cap Q^{i_0}})}
\\
\le c\delta^s \|e(u)\|_{L^p(\Omega_\delta\cap Q^{i_0})}
\end{multline} 
where $s:=\min\{\bar p/p,1/(np)\}$
and we have used~\eqref{eq:estimmolif}. We deduce the first assertion
in Property~\ref{prop:app3}.~by choosing $\Omega=Q_{1-\sqrt{\delta}}$.
In addition, it follows that
\begin{multline}\label{eq:4}
\Big(\int_{\Omega\cap \tilde Q}f_0(e(\tilde u))dx\Big)^{1/p}\leq
\Big(\int_{\Omega\cap \tilde Q}f_0(e(u)*\rho_\delta)dx\Big)^{1/p}
+ c\delta^s \|e(u)\|_{L^p(\Omega_\delta\cap Q^{i_0})}
\\
\leq \Big(\int_{\Omega_\delta\cap \tilde Q}f_0(e(u))dx\Big)^{1/p}
+ c\delta^s \|e(u)\|_{L^p(\Omega_\delta\cap Q^{i_0})}
\\
\leq (1+c\delta^s)\Big(\int_{\Omega_\delta\cap Q^{i_0}}f_0( e(u))dx\Big)^{1/p}.
\end{multline}
}
Observing that
{for $\delta\le \eta\le 1$, 
$(1+c\delta^{ s})^p\leq  1+ pc(1+c)^{p-1} \delta^{ s}$,}
we end up with the estimate
\begin{equation}\label{eq:estimcenter}
\int_{\Omega\cap\tilde Q}f_0(e(\tilde u))dx
\le (1+c\delta^{s})\int_{\Omega_\delta\cap Q^{i_0}}f_0(e(u))dx .
\end{equation}

We now turn to the boundary layer and estimate $\int_{\Omega\cap Q^{i_0}\setminus \tilde Q} f_0(e(\tilde u))dx$.
This is done in a similar way, however less precise, and we only
find that
\begin{equation}\label{eq:crownep}
\int_{\Omega\cap Q^{i_0}\setminus \tilde Q} f_0(e(\tilde u)) dx
\le  c\int_{\Omega_\delta\cap (C^{i_0}\cup C^{i_0+1})} f_0(e(u)) dx.
\end{equation}
Indeed, for now $i>N_0$ (that is, $q_i$ a good cube at
scale $\delta 2^{-k}$ for some $k\ge 1$), one writes first that
(using again that $\rho_{\delta_{q_i}}*a_i=a_i$ since $\rho$ is even),
\begin{multline*}
\|e(u_i)\|_{L^p(q'_i)} =
\|e(u_i-a_i)\|_{L^p(q'_i)} =
\|e(\rho_{\delta_{q_i}}*(u\chi_{q_i''\setminus\omega_i}+a_i\chi_{\omega_i}-a_i))\|_{L^p(q'_i)}
\\
\le \frac{\|\nabla\rho\|_{L^1}}{\delta_{q_i}}
\|u-a_i\|_{L^p(q_i''\setminus\omega_i)}
\le C\|e(u)\|_{L^p(q'''_i)},
\end{multline*}
thanks to~\eqref{eq:ccf}.
Hence,
to show~\eqref{eq:crownep} it remains to estimate
$\|\sum_i \nabla\p_i\odot u_i\|_{L^p(q'_i)}$ for $i> N_0$.
As before we observe that this is bounded by
\[
\sum_{j\sim i} \|\nabla \p_j\odot (u_i-u_j)\|_{L^p(q'_j\cap q'_i)}
\]
and thanks to the fact that $|\nabla\p_i|\le C/\delta_{q_i}$ and,
when $j\sim i$, 
$\delta_{q_j}\in \{(1/2)\delta_{q_i},\delta_{q_i},2\delta_{q_i}\}$),
each term in the sum is bounded by 
\[
\frac{C}{\delta_{q_i}}\|u_i-a_i\|_{L^p(q'_i)}+ 
\frac{C}{\delta_{q_j}}\|u_j-a_j\|_{L^p(q'_j)}+ 
\frac{C}{\delta_{q_i}}\|a_i-a_j\|_{L^p(q'_i\cap q'_j)}.
\]
Thanks 
{to~\eqref{eq:stimi3}}
and~\eqref{eq:stimij}, this is bounded
by $\|e(u)\|_{L^p(q'''_i\cup q'''_j)}$ and~\eqref{eq:crownep} follows.

Hence using \eqref{eq:estimcrown} 
and $\tilde u=u$ in $\mathcal B\cup Q\setminus Q^{i_0}$ we get
\begin{equation}\label{eq:estimenergy}\int_{\Omega}f_0(e(\tilde u))dx\leq (1+c\delta^s)\int_{\Omega_\delta}f_0(e(u))dx
{+ c\sqrt{\delta}\int_{Q\setminus Q_{{1-\sqrt{\delta}}}} f_0(e(u)) dx.}
\end{equation}
{Using $s<1/2$, we deduce Property \ref{prop:app3}.}

Let now $\psi\in\Lip(Q;[0,1])$. Then
\[
 \int_Q \psi f_0(e(\tilde u))dx = \int_Q \int_0^{\psi(x)} f_0(e(\tilde u))dx  dt
  = \int_0^1  \int_{\{x: t<\psi(x)\}} f_0(e(\tilde u))dx  dt.
\]
If $\Omega=\{x: t<\psi(x)\}$, then 
$\Omega_\delta\subset \{x: t<\psi(x)+c_\psi \delta\}$, where $c_\psi=3 n^{1/2} \Lip(\psi)$.
Therefore \eqref{eq:estimenergy} implies
{\begin{multline*}
 \int_Q \psi f_0(e(\tilde u))dx 
  \\
\le \int_0^1
\Big(  (1+c\delta^s) \int_{\{x: t<\psi(x) +c_\psi\delta\}} f_0(e( u))dx  
+ c\sqrt{\delta}\int_{Q\setminus Q_{{1-\sqrt{\delta}}}} f_0(e(u)) dx 
 \Big)dt\\
  = (1+c\delta^s) \int_Q   (\psi(x)+c_\psi\delta) f_0(e( u))dx  
+ c\sqrt{\delta}\int_{Q\setminus Q_{{1-\sqrt{\delta}}}} f_0(e(u)) dx.
\end{multline*}
}
This proves Property~\ref{prop:app-psif}.

We finally estimate the distance between $\tilde u$ and $u$ outside of 
{$\tilde \omega:= \bigcup_i\omega_i\setminus \calB$.}
We find by \eqref{eq:ccf}
{and~\eqref{eq:stimi3}}:
\begin{multline}\label{eq:6}
\int_{Q\setminus \tilde{\omega}}|\tilde u-u|^pdx\leq c\sum_i \int_{q'_i\setminus\omega_i}|u_i-u|^pdx\leq\\
\leq c\sum_i\int_{q'_i}|u_i-a_i|^pdx+c\sum_i\int_{q'_i\setminus\omega_i}|u-a_i|^pdx
\leq c\delta^p\int_Q|e(u)|^pdx.
\end{multline}
This proves Property~\ref{prop:app-u}, together with the
observation\footnote{\cite[Prop.~6]{ChambolleContiFrancfort2016} shows that, in fact, one
could ensure that $|\omega_i|\le c\H^{n-1}(J_u\cap q_i''')^{n/(n-1)}$. With
our choice of $\delta$, this improves the
inequality to $|\omega_i|\le c\delta^{n/(n-1)}\H^{n-1}(J_u\cap q_i''')$.
Hence the first point in Prop.~\ref{prop:app-u} could be
improved, in fact, to $|\tilde{\omega}|\le c\delta^{n/(n-1)}
\H^{n-1}(J_u\cap Q_R)\approx \H^{n-1}(J_u\cap Q_R)^{n/(n-1)}$.}
that $|\omega_i|\leq c_*\delta\H^{n-1}(J_u\cap q_i''')$.

One has moreover $J_{\tilde u}\cap Q^{i_0}\subseteq \partial^*\mathcal{B}{\cup
{(J_u\setminus Q^{i_0+1})}}$, {and $J_{\tilde u}\setminus J_u
\subseteq \partial^*\mathcal{B}$,} which is bounded
by~\eqref{eq:newjumpset}.

For $y\in \partial Q^{i_0}$ one also obtains (letting $\delta_k:=\delta2^{-k}$) that
\[
\int_{B_{\delta_k}(y)\setminus\tilde \omega} |\tilde u-u|^p {dx}\le C\delta^p 2^{-kp}
\int_{B_{\delta_{k-1}}(y)}|e(u)|^p dx,
\]
while $|\tilde \omega\cap B_{\delta_k}(y)|\le C\delta_k (1+1/\eta)\H^{n-1}(J_u\cap B_{\delta_{k-1}}(y))$. 
Hence we have for every $\eps>0$ and for $\Hn$-a.e. $y\in\partial Q^{i_0}$
\begin{multline*}
\frac{1}{\delta_k^n}\left|\left\{|\tilde{u}-u|>\eps \right\}\cap B_{\delta_k}(y)\right|\leq\\
\leq\frac{|\tilde \omega\cap B_{\delta_k}(y)|}{\delta_k^n}+\frac{1}{\eps}\left(\fint_{B_{\delta_k}(y)\setminus\tilde \omega} |\tilde u-u|^pdx\right)^{1/p}\\
\leq\frac{c\Hn(J_u\cap B_{\delta_k}(y))}{\delta_k^{n-1}}+c\delta_k\left(\fint_{B_{\delta_{k-1}}(y)}|e(u)|^pdx\right)^{1/p},
\end{multline*}
with the last line which vanishes
{as $\delta_k\downarrow 0$} thanks to \eqref{eq:nochargei}.
Hence one can deduce that
the trace of $\tilde u$ on $\partial Q^{i_0}$ coincides with
the inner trace of $u$, showing that $\tilde u$ is $GSBD$ when extended
with the value of $u$ out of $Q^{i_0}$, with $J_{\tilde u}\setminus {Q^{i_0}}=J_u$.
\smallskip
In particular we obtain that 
\[ \tilde u\in GSBD^p(Q)\cap C^\infty(Q^{i_0+1}), \quad \textup{and}\quad
\tilde u\in C^\infty( (-({1-\sqrt{\delta}}),{1-\sqrt{\delta}})^n).
\]

{Assume eventually that $u\in L^p(Q{;\R^n})$,
and let us show also that we can ensure also Property~\ref{prop:app-lp}.~in
this case. Let $\Omega\subset Q$. 
One now has thanks to~\eqref{eq:6}
\begin{equation}\label{eq:firstlp}
\|\tilde u\|_{L^p(\Omega)}\le  \|u\|_{L^p(\Omega\setminus\tilde\omega)}
+  c\delta \|e(u)\|_{L^p(Q)} +\|\tilde u\|_{L^p(\Omega\cap \tilde \omega)}
\end{equation}
and we need to estimate the last term, starting from the fact
that
\begin{equation}\label{eq:prop61}
\int_{\Omega\cap \tilde\omega} |\tilde u|^pdx \le \sum_{i=1}^\infty \int_{\Omega\cap q'_i\cap\tilde\omega} |u_i|^pdx.
\end{equation}
For each $i\ge 1$,
\begin{multline}\label{eq:prop62}
\int_{\Omega\cap q'_i\cap\tilde\omega} |u_i|^p dx
\le c\int_{\Omega\cap q'_i\cap\tilde\omega} |u_i-a_i|^pdx + c
\int_{\Omega\cap q'_i\cap\tilde\omega} |a_i|^p dx
\\
\le c\delta_{q_i}^p \int_{q'''_i} |e(u)|^pdx + c
\int_{\Omega\cap q'_i\cap\tilde\omega} |a_i|^p dx
\end{multline}
thanks to~\eqref{eq:stimi3}.
We now consider $q_i$ such that $q'_i\cap\Omega\neq\emptyset$.
Notice that 
\[
|\tilde\omega\cap q'_i|
\le c_*\left[\delta_{q_i}\H^{n-1}(J_u\cap q_i''')+\sum_{j\sim i}\delta_{q_j}\H^{n-1}(J_u\cap q_j''')\right]
\le c_*\min\{3^n\delta^{n+1},6^n \eta\delta_{q_i}^n\},
\]
which is small if $\eta$ is small enough. 
In this case, thanks to Lemma~\ref{lemmarigid} (and more precisely
its consequence~\eqref{eq:estima})\footnote{this requires an additional
condition on $\eta$ which might need to be reduced.}
\[
\int_{\Omega\cap q'_i\cap\tilde\omega} |a_i|^p dx
\le c \frac{|\tilde\omega\cap q'_i|}{|q_i|} \int_{q_i\setminus \tilde\omega_i} |a_i|^p dx
\le c \frac{|\tilde\omega\cap q'_i|}{|q_i|} \int_{q_i\setminus \tilde\omega_i} {(}|u-a_i|^p
+ |u|^p{)} dx.
\]
The first term in the integral
is estimated as usual with $\delta_{q_i}^p\int_{q'''_i}|e(u)|^pdx$,
while, for the second, we use that
\[
\frac{|\tilde\omega\cap q'_i|}{|q_i|} 
\le 
\begin{cases} 3^nc_* \delta  
& \textup{ if } i\le N_0,\\
6^n c_*\eta 
& \textup{ if } i>N_0
\end{cases}
\]
to deduce
\[
\sum_i \int_{\Omega\cap q'_i\cap\tilde\omega} |a_i|^p dx
\le c\delta^p\int_{Q}|e(u)|^p dx + c\delta\int_{\Omega_\delta\cap Q^{i_0+1}}|u|^p dx
+ c\eta \int_{\Omega_\delta\cap C^{i_0}} |u|^p dx.
\]
Using~\eqref{eq:prop61}, \eqref{eq:prop62}, we find
\begin{equation*}
\int_{\Omega\cap\tilde\omega}|\tilde u|^p dx\le c\delta^p\int_Q|e(u)|^pdx
+ c\delta\int_{\Omega_\delta}|u|^p dx
+ c\eta\int_{\Omega_\delta \cap C^{i_0}}|u|^p dx.
\end{equation*}
To estimate the last term, there are two possible strategies: one
is to send $\eta\to 0$ as $\delta\to 0$, the other which
we adopt here is to require, when selecting the index $i_0$, that
in addition to~\eqref{eq:estimcrown} it satisfies:
\[
\int_{C^{i_0}} |u|^pdx   \le 8\sqrt{\delta}\int_{Q\setminus Q_{1-\sqrt\delta}} |u|^p dx,
\]
which can be ensured in the same way. We deduce in this
case that
\begin{equation}\label{eq:lastlp}
\int_{\Omega\cap\tilde\omega}|\tilde u|^p dx\le c\delta^p\int_Q|e(u)|^pdx
+ c\delta\int_{\Omega_\delta}|u|^p dx
+ c\sqrt{\delta}\int_{Q\setminus Q_{1-\sqrt{\delta}}}|u|^p dx.
\end{equation}}
{Hence we obtain Property~\ref{prop:app-lp}.~from~\eqref{eq:firstlp} and~\eqref{eq:lastlp}.}

\end{proof}

\begin{lemma}\label{lemmachoice}
 Let $a_i\ge0$, $b_i\ge 0$ be such that
 \begin{equation*}
  \sum_{i=1}^k a_i\le A \hskip5mm\text{and}\hskip5mm\sum_{i=1}^k b_i\le B\,.
 \end{equation*}
 Then there is $j\in \{1,\dots, k\}$ such that
 \begin{equation*}
   a_j\le \frac2k A \hskip5mm\text{and}\hskip5mm b_j\le\frac2k B\,.
 \end{equation*}
\end{lemma}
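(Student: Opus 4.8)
\textbf{Proof plan for Lemma~\ref{lemmachoice}.}
This is a straightforward averaging (pigeonhole) argument, and I do not anticipate any real obstacle. The plan is to isolate the indices that are ``bad'' for each of the two constraints and to show that together they cannot exhaust all of $\{1,\dots,k\}$.

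Concretely, I would set
\[
I_a := \Big\{\, i\in\{1,\dots,k\} : a_i > \tfrac{2}{k} A \,\Big\},
\qquad
I_b := \Big\{\, i\in\{1,\dots,k\} : b_i > \tfrac{2}{k} B \,\Big\}.
\]
If $\#I_a \ge k/2$, then summing only over $I_a$ gives
$\sum_{i=1}^k a_i \ge \sum_{i\in I_a} a_i > \#I_a \cdot \tfrac{2}{k} A \ge A$,
contradicting the hypothesis $\sum_{i=1}^k a_i\le A$; hence $\#I_a < k/2$. The identical argument applied to $b_i$ and $B$ yields $\#I_b < k/2$.

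Therefore $\#(I_a\cup I_b)\le \#I_a+\#I_b < k$, so there exists an index $j\in\{1,\dots,k\}\setminus(I_a\cup I_b)$. By definition of $I_a$ and $I_b$, this $j$ satisfies $a_j\le \tfrac{2}{k}A$ and $b_j\le \tfrac{2}{k}B$, which is the claim. (Note the strict inequalities in the definitions of $I_a,I_b$ make the counting bounds strict, which is exactly what is needed to conclude $\#(I_a\cup I_b)<k$; one could equally work with the non-strict threshold and a slightly looser counting estimate.)
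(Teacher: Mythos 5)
Your proof is correct but takes a genuinely different route from the paper's. The paper argues in one line: normalizing, it writes $\sum_{i=1}^{k}\left(\frac{a_i}{A}+\frac{b_i}{B}\right)\le 2$, picks $j$ minimizing the summand, and observes that the smallest of $k$ nonnegative terms summing to at most $2$ is at most $2/k$; since $\frac{a_j}{A}$ and $\frac{b_j}{B}$ are each nonnegative, both are separately at most $2/k$, giving $a_j\le\frac2k A$ and $b_j\le\frac2k B$ (with a short remark that if $A=0$ then all $a_i=0$, to cover the division). Your version instead counts the \emph{bad} indices for each constraint separately, shows each bad set has fewer than $k/2$ elements, and applies a union bound. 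Both are one-step pigeonhole arguments with the same quantitative content; the paper's is more compact, while yours avoids dividing by $A$ and $B$ and therefore absorbs the degenerate cases $A=0$ or $B=0$ without any extra comment.
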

\begin{proof}
 We write
 \begin{equation*}
  \sum_{i=1}^k \frac{a_i}{A} +\frac{b_i}{B}\le 2
 \end{equation*}
 and choose $j$ as one index for which the summand is minimal. Notice that if $A=0$ then all $a_i$ are also 0.
\end{proof}

{
\begin{lemma}\label{lemmarigid}
Let $q\subset\R^n$ be a cube, and $\omega\subset q$ and $p\ge 1$:
there exists a constant $c$ (depending only on $n,p$) such that
\[
\int_\omega |a|^p dx \le c\frac{|\omega|}{|q|} \int_q |a|^p dx
\]
for any affine function $a:q\to \R$.
\end{lemma}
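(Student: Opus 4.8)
The plan is to exploit scale invariance to reduce to a fixed reference cube, and then to use the finite-dimensionality of the space of affine functions. First I would observe that both sides of the inequality transform in the same way under an affine bijection: writing $q=x_0+rQ_0$ with $Q_0:=(0,1)^n$ and $T:Q_0\to q$, $T(y)=x_0+ry$, and setting $\tilde a:=a\circ T$ (again affine) and $\tilde\omega:=T^{-1}(\omega)$, a change of variables with Jacobian $r^n=|q|$ gives $\int_\omega|a|^p\,dx=|q|\int_{\tilde\omega}|\tilde a|^p\,dy$, $\int_q|a|^p\,dx=|q|\int_{Q_0}|\tilde a|^p\,dy$ and $|\omega|=|q|\,|\tilde\omega|$. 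Hence the claimed estimate on $q$ is equivalent to $\int_{\tilde\omega}|\tilde a|^p\,dy\le c\,|\tilde\omega|\int_{Q_0}|\tilde a|^p\,dy$, which (since $|Q_0|=1$) is precisely the same statement on $Q_0$. So it suffices to prove the inequality for $q=Q_0$, with a constant depending only on $n$ and $p$.

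On $Q_0$ I would estimate crudely $\int_\omega|a|^p\,dx\le|\omega|\,\|a\|_{L^\infty(Q_0)}^p$, so that the lemma reduces to the single inequality $\|a\|_{L^\infty(Q_0)}^p\le c\int_{Q_0}|a|^p\,dx$, i.e.\ to the comparability of the $L^\infty(Q_0)$ and $L^p(Q_0)$ norms on affine functions. This is the heart of the matter: the space $\mathcal{A}$ of affine maps $\R^n\to\R$ is $(n+1)$-dimensional, and $a\mapsto\|a\|_{L^\infty(Q_0)}$ and $a\mapsto\|a\|_{L^p(Q_0)}$ are both genuine norms on $\mathcal{A}$ (an affine function vanishing a.e.\ on $Q_0$, being continuous and determined by its restriction to a set of positive measure, vanishes identically). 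Since all norms on a finite-dimensional space are equivalent, there is $c=c(n,p)$ with $\|a\|_{L^\infty(Q_0)}\le c^{1/p}\|a\|_{L^p(Q_0)}$, and unwinding the first-step reduction completes the argument.

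The main (indeed essentially the only) obstacle is ensuring that the constant is truly independent of the cube $q$ and of the subset $\omega$; this is exactly what the normalization in the first step secures, since after it everything happens on the one fixed cube $Q_0$. If an explicit constant is desired, for $p=2$ one can write $a(x)=\alpha+\beta\cdot(x-x_{Q_0})$ with $x_{Q_0}$ the centre of $Q_0$, compute $\int_{Q_0}|a|^2\,dx=\alpha^2+\tfrac1{12}|\beta|^2$ and $\|a\|_{L^\infty(Q_0)}\le|\alpha|+\tfrac12\sqrt n\,|\beta|$, obtaining $\|a\|_{L^\infty(Q_0)}^2\le 6n\int_{Q_0}|a|^2\,dx$; the case of general $p\ge1$ then follows from the finite-dimensional equivalence of $\|\cdot\|_{L^2(Q_0)}$ and $\|\cdot\|_{L^p(Q_0)}$ on $\mathcal{A}$.
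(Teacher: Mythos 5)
Your proof is correct and follows essentially the same route as the paper's: normalize to the unit cube by an affine change of variables, bound $\int_\omega|a|^p$ by $|\omega|\,\|a\|_{L^\infty}^p$, and invoke equivalence of norms on the finite-dimensional space of affine functions to compare $\|a\|_{L^\infty(Q_0)}$ with $\|a\|_{L^p(Q_0)}$. The explicit $p=2$ computation you add at the end is a nice bonus but not needed.
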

If $\theta<1$ and
$\theta q$ is the cube with same center as $q$ and sides multiplied
by $\theta$, it is possible to show by a direct computation  that for $a$ affine,
\[
\|a\|_{L^p(q)} \le \frac{1}{\theta^{n/p+1}}\|a\|_{L^p(\theta q)}.
\]
Hence one can also deduce (for $c=c(n,p,\theta)$)
\[
\int_\omega |a|^p dx \le c\frac{|\omega|}{|q|} \int_{\theta q} |a|^p dx{;}
\]
additionally if $\frac{|\omega|}{|q|}$ is small enough (so that
$\frac{|\omega|}{|q|}\left(1-c\frac{|\omega|}{|q|}\right)^{-1}\le 2\frac{|\omega|}{|q|}$), we deduce
\begin{equation}\label{eq:estima}
\int_\omega |a|^p dx \le c\frac{|\omega|}{|q|} \int_{\theta q\setminus\omega}
|a|^p dx.
\end{equation}
\begin{proof}
By translation and scaling, it is enough to show the lemma for $q=(0,1)^n$.
Then, $\|a\|_{L^p(q)}$ is a norm on the finite-dimensional space of
 affine functions. Hence there exists $c$ (depending only
on $p$) such that $\sup_{x\in q}|a(x)|\le c\|a\|_{L^p(q)}$. We conclude
by observing that $(1/|\omega|)\int_\omega |a(x)|{^p}dx \le \sup_{x\in q}|a(x)|{^p}$
for any $\omega\subset q$.
\end{proof}
}

\section{{Limit of minimizing sequences with vanishing jump}}\label{s:lim}

Let ${\cost}\ge 0$, $\beta>0$, $p>1$, $g\in L^\infty(\Omega{;\R^n})$, 
$\parametermu\geq 0$, {and let $\Omega\subset\R^n$ be a bounded, open, Lipschitz set}. For all $u\in GSBD(\Omega)$ and all {Borel sets} $A\subset\Omega$ 
let us define the functional
\begin{equation}\label{e:G} G(u,\cost,\beta,A):=\int_A f_\parametermu(e(u))dx +\cost\int_A|u-g|^pdx
+ \beta \Hn(J_u\cap {A}),
\end{equation}
where
\begin{equation}
\label{eqdeffintro}
 f_\parametermu(\xi):=\frac 1p\left(\big(\mathbb{C}\xi\cdot\xi+\parametermu\big)^{\sfrac p2}-\parametermu^{\sfrac p2}\right)
 \end{equation}
and $\C$ is a symmetric linear map from $\R^{n\times n}$ to itself 
which satisfies~\eqref{eqassC}.

{We assume that $u$ is a local minimizer of $G$ in $GSBD(\Omega)$. We remark that, if $\cost>0$, then 
a global minimizer exists} by \cite[Theorem 11.3]{gbd}.

Let us introduce the following homogeneous version of $G$   
\[ 
G_0(u,\cost,\beta,A):=\int_A f_0(e(u))dx +\cost\int_A|u|^pdx+ 
\beta \Hn(J_u\cap {A}),
\]
which will be useful to establish the decay estimate and the density lower bound.
For open sets $A\subset\Omega$ we define also
the deviation from minimality 
\[
\Psi_0(u,\cost,\beta,A):=G_0(u,\cost,\beta,A)-\Phi_0(u,\cost,\beta,A),
\] 
where 
\begin{equation}\label{e:phi0}
\Phi_0(u,\cost,\beta,A):=\inf\{G_0(v,\cost,\beta,A):\ v\in GSBD(\Omega), \ \{v\ne u\}{\subset\hskip-0.125cm\subset} A\}.
\end{equation}

{The minimality of the limit $u$ of energy-minimizing sequences $u_h$ with vanishing jump sets is established similarly to \cite[Prop. 3.3]{ContiFocardiIurlano_exist}. Any competitor of $u$ is modified close to the boundary of the domain in order to become a competitor of $u_h-a_h$ for $G_0$. Estimating in $L^p$ the symmetric gradient of the transition, one obtains a term of the form $|u_h-a_h-u|^p$, which is known to vanish only a.e. and not in $L^1$. Therefore an intermediate interpolation step which passes through $\tilde u_h-a_h$ becomes necessary. The functions $\tilde u_h$, constructed via Theorem \ref{th:app}, are smooth away from a small neighborhood of the boundary, whose size decreases as $h\to\infty$. This permits to perform the entire construction in $W^{1,p}$ and greatly simplifies the computations.}

{The following theorem is the generalization to arbitrary dimension of \cite[Prop. 3.4]{ContiFocardiIurlano_exist}.} {
Notice that the proof is made a bit easier with respect to \cite[Prop. 3.4]{ContiFocardiIurlano_exist} by the fact that the approximation we employ (see Section \ref{s:app}) is more precise than the one employed in \cite{ContiFocardiIurlano_exist} (see \cite[Sections 2 and 3]{ContiFocardiIurlanoRepr}).
}

\begin{theorem}[{Convergence} and minimality]\label{t:cm} 
Let $p\in(1,\infty)$.
Let $Q_r$ be a cube, $u_h\in GSBD^p(Q_r)$ and $\cost_h\in [0,\infty)$,
 $\beta_h\in(0,\infty)$ be two sequences 
with $\cost_h\to 0$ as $h\to\infty$, and such that
\begin{equation*}
\sup_h G_0(u_h,\cost_h,\beta_h, Q_r)<\infty,\text{ and  } \lim_{h\to\infty} \Psi_0(u_h,\cost_h,\beta_h, Q_r)=
\lim_{h\to\infty} \calH^{n-1}(J_{u_h})=0\,.
\end{equation*}
Then there exist $u\in W^{1,p}(Q_r{;\R^n})$, $\overline{a}:\R^n\to\R^n$ affine with $e(\overline{a})=0$,
and a subsequence $h_j$
such that
\begin{enumerate}
	
	\item\label{propconvenerg-itrho}  for all $t\in (0,r)$
	\[ 
	\lim_{j\to\infty} G_0(u_{h_j},\cost_{h_j},\beta_{h_j},Q_t)=\int_{Q_t} \fz(e(u)) dx+
	\int_{Q_t}|\overline{a}|^pdx;
	\]
	
	\item\label{propconvenerg-itmin} for all $v\in u+W^{1,p}_0(Q_r)$ 
	\[
	\int_{Q_r} \fz(e(u))dx\le \int_{Q_r} \fz(e(v))dx;
	\]
	
	\item \label{propconvenerg-remark}
	$u_{h_j}-a_{j}\to u$ pointwise $\calL^n$-a.e. on $Q_r$ for some affine functions $a_j$, 
	{$e(u_{h_j})\to e(u)$ in $L^p(Q_t)$, ${\beta_{h_j}\calH^{n-1}(J_{u_{h_j}}\cap Q_t)}\to 0$, 
		and ${\cost_{h_j}^{\sfrac 1p}u_{h_j}}\to\overline{a}$ in $L^p(Q_t)$ for all $t\in(0,r)$.}
\end{enumerate}	
\end{theorem}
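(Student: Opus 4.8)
The strategy is to combine compactness for the sequence $u_h$ with the approximation result of Theorem~\ref{th:app}, following the scheme of \cite[Prop.~3.4]{ContiFocardiIurlano_exist}. First I would extract a limit: since $\sup_h G_0(u_h,\cost_h,\beta_h,Q_r)<\infty$ and $\calH^{n-1}(J_{u_h})\to 0$, the strain energy bound gives $\sup_h\|e(u_h)\|_{L^p(Q_r)}<\infty$, and the compactness theorem for $GSBD^p$ (see \cite{gbd}, or the Poincar\'e--Korn inequality of Proposition~\ref{th:prop3b} applied on a fixed subcube) produces affine maps $a_j$ with $e(a_j)=0$ and a function $u$ such that $u_{h_j}-a_j\to u$ pointwise $\calL^n$-a.e.\ on $Q_r$, with $e(u_{h_j})\rightharpoonup e(u)$ weakly in $L^p$ and, because $\calH^{n-1}(J_{u_{h_j}})\to 0$, the limit satisfies $J_u=\emptyset$, i.e.\ $u\in W^{1,p}_{\loc}(Q_r;\R^n)$; a further diagonal argument upgrades this to $u\in W^{1,p}(Q_t)$ for all $t<r$ and, using the $\cost_h|u_h-g|^p$ term and $\cost_h\to 0$, one extracts the affine limit $\overline a$ of $\cost_{h_j}^{1/p}u_{h_j}$ (note $\cost_{h_j}^{1/p}a_j$ are affine with vanishing symmetrized gradient and bounded in $L^p$). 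This gives the pointwise part of \ref{propconvenerg-remark}.\ and the existence of $u,\overline a$.

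Next I would prove the minimality \ref{propconvenerg-itmin}.\ together with the convergence of energies \ref{propconvenerg-itrho}. Fix $t\in(0,r)$ and a competitor $v\in u+W^{1,p}_0(Q_t)$; I want to build, for each large $j$, a competitor $w_j$ for $u_{h_j}-a_j$ in the definition of $\Phi_0$ on a slightly larger cube. The obstacle is that $u_{h_j}-a_j\to u$ only $\calL^n$-a.e.\ and in measure, not in $L^p$, so one cannot directly interpolate between $v$ and $u_{h_j}-a_j$ near $\partial Q_t$ with an $L^p$-controllable symmetrized gradient of the transition. This is exactly where Theorem~\ref{th:app} enters: I would apply it to $u_{h_j}-a_j$ (rescaled to the unit cube), with $\delta_j=\calH^{n-1}(J_{u_{h_j}})^{1/n}\to 0$, obtaining smooth approximants $\tilde u_{h_j}$ that agree with $u_{h_j}-a_j$ near $\partial Q_r$, have only a small extra jump (\ref{prop:app2}.), satisfy the energy bound $\int_\Omega f_0(e(\tilde u_{h_j}))\le \int_{\Omega_{\delta_j}} f_0(e(u_{h_j}-a_j))+c\delta_j^s\int f_0(e(u_{h_j}-a_j))$, and obey $\|\tilde u_{h_j}-(u_{h_j}-a_j)\|_{L^p(Q\setminus\tilde\omega_j)}\le c\delta_j\|e(u_{h_j})\|_{L^p}$ with $|\tilde\omega_j|\to 0$. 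Since $\tilde u_{h_j}$ is in $W^{1,p}$ away from a shrinking boundary layer, and $\tilde u_{h_j}\to u$ in measure while being uniformly bounded in $W^{1,p}$ on interior cubes, one gets $\tilde u_{h_j}\to u$ strongly in $L^p(Q_t)$ on slightly smaller cubes. Then the interpolation $w_j := \chi\, v + (1-\chi)\,\tilde u_{h_j}$ on a thin annulus near $\partial Q_{t}$ (cutoff $\chi$), followed by $u_{h_j}-a_j$ outside, is a legitimate competitor; the transition term is controlled by $\|\nabla\chi\|_\infty^p\|v-\tilde u_{h_j}\|_{L^p}^p\to 0$, the fidelity term $\cost_{h_j}\int|w_j-g|^p$ is negligible as $\cost_{h_j}\to 0$, and the extra jump of $\tilde u_{h_j}$ is accounted for by the $\beta_{h_j}\calH^{n-1}$ term via \ref{prop:app2}. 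Applying $\Psi_0(u_{h_j},\cdot)\to 0$ then yields $\int_{Q_t}f_0(e(u_{h_j}-a_j))+\cost_{h_j}\int_{Q_t}|u_{h_j}-a_j-\text{(stuff)}|^p\le \int_{Q_t}f_0(e(v))+o(1)$; passing to the liminf and using weak lower semicontinuity and strong convergence of $\tilde u_{h_j}$ gives \ref{propconvenerg-itmin}.

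Finally, \ref{propconvenerg-itrho}.\ and the strong convergence statements in \ref{propconvenerg-remark}.\ follow by the usual upper-lower bound sandwich: the liminf inequality comes from lower semicontinuity of $\int f_0(e(\cdot))$, of $\cost\int|\cdot-g|^p$, and of $\calH^{n-1}(J_{\cdot})$ (the last giving nothing useful but also costing nothing since $\calH^{n-1}(J_{u_{h_j}})\to 0$), while the limsup inequality is obtained by testing the deviation-from-minimality with the competitor $v=u$ itself (extended by $u_{h_j}-a_j$ near the boundary through Theorem~\ref{th:app} as above), which forces $\limsup_j \int_{Q_t} f_0(e(u_{h_j}-a_j))\le \int_{Q_t}f_0(e(u))$; combined with the liminf inequality this gives convergence of the strain energies, hence $e(u_{h_j})\to e(u)$ strongly in $L^p(Q_t)$ by uniform convexity of $f_0$ (here \eqref{eqassC} is used), and then convergence of all three terms separately, i.e.\ $\cost_{h_j}^{1/p}u_{h_j}\to\overline a$ in $L^p(Q_t)$ and $\beta_{h_j}\calH^{n-1}(J_{u_{h_j}}\cap Q_t)\to 0$. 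I expect the main obstacle to be precisely the construction of the boundary interpolation with controlled energy: resolving the mismatch between mere a.e.\ convergence of $u_{h_j}-a_j$ and the $L^p$-control needed for the transition layer is the whole point of inserting $\tilde u_{h_j}$, and one must be careful that the shrinking ``bad'' layer of Theorem~\ref{th:app} stays inside the annulus where the cutoff lives, which is ensured by choosing the interpolation annulus at a fixed distance from $\partial Q_t$ and $j$ large enough that $\sqrt{\delta_j}$ is smaller than that distance.
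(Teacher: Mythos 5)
Your overall strategy coincides with the paper's: extract $a_j$, $\overline a$, and $u$ via the Poincar\'e--Korn inequality and weak $L^p$ compactness, insert the smooth approximants $\tilde u_{h_j}$ of Theorem~\ref{th:app} to turn a.e.\ convergence into $L^p$-convergence, build a cutoff competitor, invoke the vanishing deviation $\Psi_0$, pass to the limit, and finally use $v=u$ plus uniform convexity to upgrade to strong convergence. However, two concrete steps in your energy comparison are wrong or missing, and both are the ones the paper is careful to justify.

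First, the fidelity term. You claim ``the fidelity term $\cost_{h_j}\int|w_j-g|^p$ is negligible as $\cost_{h_j}\to 0$''. The functional $G_0$ has $\cost_h\int|u_h|^p$ (no $g$), and this does \emph{not} vanish: $\cost_{h_j}^{1/p}u_{h_j}\rightharpoonup \overline a$ in $L^p$, so $\cost_{h_j}\int_{Q_t}|u_{h_j}|^p$ has a non-trivial limit $\int_{Q_t}|\overline a|^p$, which must cancel against the corresponding term on the right-hand side. The cancellation requires comparing $\cost_h\int(1-\psi)|\tilde u_h|^p$ with $\cost_h\int(1-\psi)|u_h|^p$, and this is precisely what Property~\ref{prop:app-lp}.\ of Theorem~\ref{th:app} delivers (via the layer-cake representation of $\psi$). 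Without it, you cannot subtract the fidelity of $u_{h_j}$ from both sides and the argument does not close.

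Second, the bulk strain energy outside the core of the cutoff. After the cutoff substitution you must deal with the contribution $\int(1-\psi)f_0(e(\tilde u_{h_j}))$ coming from the competitor $w_j$, but the term to subtract on the left-hand side is $\int(1-\psi)f_0(e(u_{h_j}))$. Your proposal only addresses the $\nabla\chi$ transition term, and seems to assume the bulk terms just match. They do not a priori; what closes the gap is Property~\ref{prop:app-psif}.\ of Theorem~\ref{th:app}, which gives $\int\psi f_0(e(\tilde u))\le \int\psi f_0(e(u))+c\delta^s(1+\Lip\psi)\int|e(u)|^p$, i.e.\ an estimate compatible with Lipschitz cutoffs. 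The plain inequality of Property~\ref{prop:app3}.\ on a fixed open set $\Omega$ involves an enlarged $\Omega_\delta$ and is not directly usable when multiplied by $(1-\psi)$; the paper derives Property~\ref{prop:app-psif}.\ precisely to make this step legitimate, and then, only \emph{after} this cancellation, passes to the limit and adds back $(1-\psi)f_0(e(u))$ using that $v=u$ on $\{\psi<1\}$. These two points are where your proof would stall without the corresponding ingredients.
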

\begin{proof}
  We assume $r=1$ (wlog) and we denote $Q:=Q_1$. 
  By monotonicity, after taking a subsequence (not relabelled) we can assume that for all~$s\in (0,{1}]$, 
  \[
  \lim_{h\to\infty} G_0(u_h,\cost_h,\beta_h,Q_s)=: \Lambda(s)
  \]
  exists and is finite.
  Since $\cost_h^{1/p}u_h$ is bounded in $L^p(Q)$,
  it has a subsequence converging to some $\bar a$, weakly.
  By \cite[Prop. 2]{ChambolleContiFrancfort2016} there exist $\omega_h,a_h$ such that $|\omega_h|\le c\H^{n-1}(J_{u_h})$ and
  \[
  \int_{Q\setminus \omega_h} |u_h-a_h|^pdx \le C\int_Q |e(u_h)|^pdx 
  \]
  which is bounded. Therefore  $\cost_h^{1/p}(u_h-a_h)\chi_{Q\setminus\omega_h}\to 0$
  strongly in $L^p$. {Recalling that 
    $\cost_h^{1/p}u_h\weakto \bar a$ weakly in $L^p$, we deduce $\chi_{Q\setminus\omega_h}\cost_h^{1/p}a_h\weakto \bar a$ 
    and then $\cost_h^{1/p}a_h\weakto \bar a$, since $|\omega_h|\to0$ and $a_h$ are affine functions.
 Being the space of affine functions finite dimensional, the convergence
 $\cost_h^{1/p}a_h\to \bar a$ is in fact strong in $L^p$ and $\bar a$ is a 
linearized rigid motion.}	
Also, observe that still up to a subsequence,
\[
(u_h-a_h)\chi_{Q\setminus \omega_h}\wto u \ \textrm{ in }L^p(Q),
\]
{for some $u\in L^p(Q{;\R^n})$.}

Let $t\in (0,1]$ be a point of (left) continuity of $\Lambda$.
Let $\delta_h=\H^{n-1}(J_{u_h}\cap Q_t)^{1/n}$ and
let $\tilde u_h$, $\tilde \omega_h$ be the functions and sets
obtained applying Theorem \ref{th:app} in $Q_t$, which obey 
$(\tilde u_h-u_h)\chi_{Q_t\setminus\tilde \omega_h}\to 0$ in $L^p$ thanks to Property~\ref{prop:app-u}.~in Theorem \ref{th:app}.

We have that up to a subsequence,
$\tilde u_h-a_h\wto \tilde u$ in $W^{1,p}(Q_s)$ for every $s<t$.
In fact, $\tilde u=u$ and we do not have to extract a subsequence
at this stage. Indeed, given $\varphi\in C_c^\infty(Q_t)$,
one has
\begin{multline*}
\int_{Q_t} (\tilde u_h-a_h)\cdot\varphi \,dx
= \int_{Q_t\cap (\omega_h\cup\tilde \omega_h)} (\tilde u_h-a_h)\cdot\varphi \,dx+
\\+ \int_{Q} 
[(u_h-a_h)\chi_{Q\setminus\omega_h}]\cdot[\varphi \chi_{Q_t\setminus \tilde \omega_h}] dx+\int_{Q_t}[(\tilde u_h-u_h)\chi_{Q_t\setminus\tilde\omega_h}]\cdot\varphi\chi_{Q_t\setminus\omega_h}dx
\\ \to \int_{Q} u\cdot\varphi \,dx
\end{multline*}
as $h\to\infty$, which shows the claim.

Observe now that thanks to Property~\ref{prop:app3}.~of Theorem \ref{th:app},
any weak limit of $e(u_h)$ in $L^p(Q_t)$ must coincide with
the weak limit of $e(\tilde u_h)$, which is $e(u)$. We deduce
that $e(u_h)\wto e(u)$ in $L^p(Q_t)$ and 
in particular that
\begin{equation}\label{eq:lsc}
\int_{Q_t}f_0(e(u)) dx \le\liminf_{h\to\infty} \int_{Q_t}f_0(e(u_h)) dx.
\end{equation}
Using also that $\cost_h^{1/p}u_h\wto \bar a$ in $L^p(Q)$, we deduce that
\begin{equation}\label{eq:lsc2}
\int_{Q_t}f_0(e(u)) + |\bar a|^p dx \le \Lambda(t).
\end{equation}

Consider now $v\in W^{1,p}(Q_t{;\R^n})$ with $\{v\neq u\}\subset\subset
Q_t$. Let $\psi\in C_c(Q_t)$ be a Lipschitz cut-off
with
$\{0<\psi<1\}\subset \{v=u\}\cap Q_{t'}$ for some $t'<t$. 
If $h$ is large enough, one also
has that $\tilde u_h\in W^{1,p}(Q_{t'}{;\R^n})$. We let
$v_h := (\tilde u_h-a_h)(1-\psi)+\psi v$. Then, for large $h$ one has  $v_h\in W^{1,p}(Q_{t'}{;\R^n})$ and $\{v_h+a_h\neq u_h\}\subset\subset Q_t$, so that by definition of $\Psi_0$ we have
	\begin{equation}\label{eq:uhvh}
	G_0(u_h,\cost_h,\beta_h,Q_t)
	\le \Psi_0(u_h,\cost_h,\beta_h,Q_1) + G_0(v_h+a_h,\cost_h,\beta_h, Q_t),
	\end{equation}	
	which we write as
	\begin{multline}\label{eq:Qt}
	\int_{Q_t} (f_0(e(u_h))  + \cost_h |u_h|^p) dx+\beta_h\Hn(J_{u_h}\cap Q_t)\\
\leq 
	 \int_{Q_t} (f_0(e(v_h))  + \cost_h |v_h+a_h|^p) dx + 
\beta_h\Hn(J_{v_h}\cap Q_t) +
o(1).
	\end{multline}

Recalling that (by Properties~\ref{prop:app1}. and~\ref{prop:app2}.
of Theorem~\ref{th:app})
$J_{v_h}\subset J_{\tilde u_h}\subset Q_t\setminus Q_{{t-\sqrt{\delta_h}}}$
and $\H^{n-1}(J_{\tilde u_h}\setminus J_{u_h})\le c\sqrt{\delta_h}\H^{n-1}(J_{u_h})$,
we {then subtract $\beta_h\Hn(J_{u_h}\cap Q_t\setminus Q_{t-\sqrt{\delta_h}})$ from both sides of inequality \eqref{eq:Qt} and we deduce}
\begin{multline}\label{eq:eqcutoff}
  \int_{Q_t} (f_0(e(u_h))  + \cost_h |u_h|^p) dx+\beta_h\Hn(J_{u_h}\cap Q_{t-\sqrt{\delta_h}})\\
\leq 
\int_{Q_t} (f_0(e(v_h))  + \cost_h |v_h+a_h|^p) dx + o(1).
\end{multline}

We have
$e(v_h)=\psi e(v)+(1-\psi)e(\tilde u_h)+\nabla\psi\odot (v-\tilde u_h+a_h)$.
Recalling that $\tilde u_h-a_h\to u$ in $L^p_\loc(Q_t)$  and that  $v=u$ outside of $\{\psi=1\}$, we obtain
$\tilde u_h-a_h-v\to 0$ in $L^p(\{0<\psi<1\})$.
Hence
\[
\int_{Q_t}f_0(e(v_h))dx \le (1+o(1)) \left[\int_{Q_t} \psi f_0(e(v)) dx
  + \int_{Q_t} (1-\psi)f_0(e(\tilde u_h)) dx\right] + o(1).
\]
By~\ref{prop:app-psif}.~in Theorem \ref{th:app} we find
\[
\int_{Q_t} (1-\psi)f_0(e(\tilde u_h))dx
\le \int_{Q_t}(1-\psi)f_0(e(u_h)) dx + o(1),
\]
so that subtracting $(1-\psi)f_0(e(u_h))$ from both sides of \eqref{eq:eqcutoff} we conclude
\begin{multline}\label{eq:murid}
  \int_{Q_t}  (\psi f_0( e(u_h))  + \cost_h |u_h|^p) dx
+\beta_h\Hn(J_{u_h}\cap Q_{t-\sqrt{\delta_h}})
\\\le \int_{Q_t} (\psi f_0(e(v))  + \cost_h |v_h+a_h|^p )dx 
  + o(1).
\end{multline}

Similarly, thanks to Property~\ref{prop:app-lp}.~of Theorem~\ref{th:app}
we have:
\begin{multline*}
\cost_h\int_{Q_t} (1-\psi)|\tilde u_h|^p dx 
\\
{=\cost_h\int_0^1 \Big(\int_{Q_t\cap \{1-\psi>s\}} |\tilde u_h|^p dx \Big) ds
\le
\cost_h\int_0^1 \Big(\int_{Q_t\cap \{1-\psi>s\}} |u_h|^p dx + o(1)\Big)ds}
\\
\le 
\cost_h\int_{Q_t} (1-\psi)|u_h|^p dx +o(1)
\end{multline*}
so that, since $v_h+a_h=\psi(v+a_h) + (1-\psi)\tilde u_h$, we deduce
from~\eqref{eq:murid}
\begin{multline}\label{eq:murid2}
  \int_{Q_t}  \psi (f_0( e(u_h))  + \cost_h |u_h|^p) dx
+\beta_h\Hn(J_{u_h}\cap Q_{t-\sqrt{\delta_h}})
\\\le \int_{Q_t} \psi (f_0(e(v))  + \cost_h |v+a_h|^p )dx 
  + o(1).
\end{multline}
Observe that $\cost_h^{1/p}|v+a_h|\to |\bar a|$ strongly.
Then, recalling \eqref{eq:lsc} and the convergence $\cost_h^{1/p}u_h\weakto \bar a$ in $L^p$, we obtain in the limit
\[
\int_{Q_t}\psi (f_0(e(u)) + |\bar a|^p) dx
\le \int_{Q_t}\psi (f_0(e(v))  + |\bar a|^p) dx. 
\]
Since $v=u$ outside of $\{\psi=1\}$, we may
add $(1-\psi)f_0(e(u))$ to both sides and we find
\begin{equation}\label{eq:uv}
  \int_{Q_t}f_0(e(u))dx\le \int_{Q_t}f_0(e(v)) dx
\end{equation}
which implies Property~\ref{propconvenerg-itmin}.~of the Theorem.

Next we show Property~\ref{propconvenerg-itrho}., namely that:
\begin{equation}
  \label{eq:Lambda}
  \Lambda(t)=\int_{Q_t}(f_0(e(u))+ |\bar a|^p )dx.
\end{equation}
For this we {choose} $v=u$ in~\eqref{eq:murid2}, and {let} $\bar t<t$
be such that $\psi\equiv 1$ in $Q_{\bar t}$, {then} we find as $h\to\infty$ that
\[
\Lambda(\bar t)\le \int_{Q_t}\psi (f_0(e(u))  + |\bar a|^p) dx \le \Lambda(t)
\]
thanks to~\eqref{eq:lsc2}. Since $t$ is a point of continuity of
$\Lambda$, \eqref{eq:Lambda} follows, sending $\bar t$ to $t$. In 
particular, we {deduce} that every $t\in (0,1]$ is a point of
continuity of $\Lambda$.

It remains {to show} Properties~\ref{propconvenerg-remark}.
The fact that $\lim_h \beta_h\H^{n-1}(J_{u_h}\cap Q_t)=0$
and that $e(u_h)\to e(u)$, $\cost_h^{1/p}u_h\to \bar a$ strongly
easily follows from Property~\ref{propconvenerg-itrho}.
Finally, as $t=1$ is a point of (left) continuity of $\Lambda$,
the above construction applied to $Q_1$ provides $\tilde u_h$ such
that $(\tilde u_h-a_h)_h$ converges
strongly to $u$ in $L^p_\loc(Q_1)$, and therefore, also
$u_h\chi_{Q_1\setminus \tilde \omega_h}$ converges to $u$ in $L^p_\loc(Q_1)$.
Hence, possibly extracting a last subsequence, we may also assume
that $u_h\to u$~a.e.~in $Q_1$.
\end{proof}

{We conclude this section by stating explicitly the compactness and semicontinuity result for functions with vanishing jump sets which has been proved within the proof of Theorem \ref{t:cm}.}
\begin{corollary}[Compactness and semicontinuity]\label{t:complsc}
	Let $p\in(1,\infty)$ and let $u_k\in GSBD^p(Q_1)$ be such that	
	 \begin{equation}\label{e:uhhp}
	 \sup_k \int_{Q_1} \fz(e(u_k))dx<\infty, \qquad {\delta_k}:=
{\Hn(J_{u_k})^{1/n}}\to0.
	 \end{equation}
Then there are a subsequence $k_h$ of $k$, a sequence $\tilde u_h\in GSBD^p(Q_1){\cap C^\infty(Q_{{1-\sqrt{\delta_{k_h}}}})}$ with $\{\tilde u_h\neq u_{k_h}\}\subset\subset Q_1$, affine functions $a_h:\R^n\to\R^n$ with $e(a_h)=0$, and $u\in W^{1,p}(Q_1{;\R^n})$, such that 
\begin{enumerate}
	\item $\tilde u_h-a_h\to u$ {in} $L^p_{\mathrm{loc}}(Q_1)$ and
	$u_{k_h}-a_h\to u$ $\calL^n$-a.e. on $Q_1$;
	\item for every $r,s$ with $r<s<1$ we have 
	$$\int_{Q_1\setminus Q_s} f_0(e(\tilde u_h))dx\leq (1+o(1))\int_{Q_1\setminus Q_{r}} f_0(e(u_{k_h}))dx,$$
	$$\Hn(J_{\tilde u_h}\cap (Q_1\setminus Q_r))\leq (1+o(1))\Hn(J_{u_{k_h}}\cap (Q_1\setminus Q_r)),$$
{	$$\int_{Q_1\setminus Q_r} |\tilde u_h|^pdx\leq \int_{Q_1\setminus Q_r} |u_{k_h}|^pdx + o(1)\int_{Q_1}\big(|u_{k_h}|^p+f_0(e(u_{k_h}))\big)dx;$$}
	\item the following lower estimate holds for all ${t}\in (0,1]$:
	\begin{equation}\label{e:tesiuh}
	\int_{Q_{t}} \fz(e(u))dx \le {\liminf_{k\to\infty} \int_{Q_{t}} \fz(e(u_{k}))dx\,.}
	\end{equation}	
\end{enumerate}
\end{corollary}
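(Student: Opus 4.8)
The plan is to observe that Corollary \ref{t:complsc} is essentially a repackaging of the intermediate statements established inside the proof of Theorem \ref{t:cm}, applied to the sequence $u_k$ with the trivial choice $\cost_k=0$, $\beta_k=1$, together with a diagonal argument over a countable family of continuity radii. First I would set $\cost_k:=0$, $\beta_k:=1$; since $\cost_k\to 0$ trivially and, by \eqref{e:uhhp}, $\sup_k G_0(u_k,0,1,Q_1)=\sup_k\int_{Q_1}f_0(e(u_k))\,dx+\Hn(J_{u_k})<\infty$, most of the hypotheses of Theorem \ref{t:cm} are met. The only hypothesis of Theorem \ref{t:cm} that is \emph{not} assumed here is the vanishing of the deviation from minimality $\Psi_0(u_k,0,1,Q_1)\to 0$; accordingly I would \emph{not} invoke the minimality conclusions \ref{propconvenerg-itmin}.~and \ref{propconvenerg-itrho}.~of that theorem, but only the parts of its proof that use compactness and lower semicontinuity and do not rely on $\Psi_0\to 0$.

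Next I would run the construction from the first part of the proof of Theorem \ref{t:cm}: by \cite[Prop.~2]{ChambolleContiFrancfort2016} choose $\omega_k,a_k$ with $|\omega_k|\le c\Hn(J_{u_k})\to 0$ and $\int_{Q_1\setminus\omega_k}|u_k-a_k|^p\,dx\le C\int_{Q_1}|e(u_k)|^p\,dx$ bounded; extract a subsequence so that $(u_k-a_k)\chi_{Q_1\setminus\omega_k}\weakto u$ in $L^p(Q_1)$ for some $u\in L^p(Q_1;\R^n)$. For each radius $t\in(0,1)$ set $\delta_k:=\Hn(J_{u_k})^{1/n}$ and apply Theorem \ref{th:app} in the cube $Q_t$ (noting $\delta_k\to0$, so eventually $\delta_k<\eta$), obtaining $\tilde u_k\in GSBD^p(Q_t)\cap C^\infty(Q_{t-\sqrt{\delta_k}})$, $\tilde\omega_k\subset Q_t$ and $R_k\in(t-\sqrt{\delta_k},t)$, extended by $u_k$ outside $Q_{R_k}$; by Property~\ref{prop:app-u}.~one has $(\tilde u_k-u_k)\chi_{Q_t\setminus\tilde\omega_k}\to 0$ in $L^p$, hence (as in Theorem \ref{t:cm}) $\tilde u_k-a_k\weakto u$ in $W^{1,p}_\loc(Q_t)$ with \emph{no} further subsequence needed, and Property~\ref{prop:app3}.~then forces $e(u_k)\weakto e(u)$ in $L^p(Q_t)$, giving the lower estimate \eqref{e:tesiuh} by weak lower semicontinuity of $v\mapsto\int f_0(e(v))$ and the continuity/monotonicity-in-$t$ argument already used there. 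Since the limit $u$ does not depend on $t$, a single diagonal subsequence $k_h$ works for a sequence $t\to 1$; relabelling $\tilde u_h$ for the construction performed in $Q_1$ itself (using that, after the argument, \emph{every} $t\in(0,1]$ — in particular $t=1$ — is available, or simply by choosing $t_h\uparrow1$ and taking $\tilde u_h$ from $Q_{t_h}$) yields $\tilde u_h-a_h\to u$ in $L^p_\loc(Q_1)$ and, after one more extraction, $u_{k_h}-a_h\to u$ a.e., which is item~(1).

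Item~(2) is then a direct transcription of Theorem \ref{th:app} with $\Omega:=Q_1\setminus Q_r$: the cube-energy inequality is the content of Property~\ref{prop:app3}.~(with $\Omega_\delta\subset Q_1\setminus Q_{r'}$ for $r'$ slightly larger than $r$, which can be absorbed into the $o(1)$ once we note $\int_{Q_1\setminus Q_{r'}}f_0(e(u_{k_h}))\le\int_{Q_1\setminus Q_r}f_0(e(u_{k_h}))$ and refine the radius), the jump inequality is Property~\ref{prop:app2}.~together with $J_{\tilde u_h}\subset J_{u_{k_h}}\cup\partial^*\calB$ and \eqref{eq:newjumpset}, and the $L^p$ inequality is Property~\ref{prop:app-lp}. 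Item~(3) is \eqref{e:tesiuh}, already obtained above. The only mild subtlety — and the step I expect to require the most care — is the bookkeeping needed to make a \emph{single} subsequence $k_h$ and a \emph{single} sequence $\tilde u_h$ serve all radii $r<s<1$ simultaneously: Theorem \ref{th:app} is applied at a fixed outer cube size, so to get the conclusions on all annuli $Q_1\setminus Q_s$ at once one must either let the outer cube be $Q_1$ throughout (so $\tilde u_h$ is built once, in $Q_1$, with the neighborhood $\Omega_{\delta_{k_h}}$ shrinking as $h\to\infty$, which is exactly what the proof of Theorem \ref{t:cm} does for $t=1$) and then note the estimates of item~(2) for smaller $s$ follow by monotonicity and by choosing $r<s$, or else perform a countable diagonalization over $t_h\uparrow 1$; I would adopt the first option, since it is cleaner and matches the $t=1$ endpoint argument already present. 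Everything else is routine and needs no new ideas beyond those in the proofs of Theorems \ref{th:app} and \ref{t:cm}.
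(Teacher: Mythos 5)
Your proposal is correct and takes essentially the same route as the paper: apply Theorem~\ref{th:app} in $Q_1$ once, extract the rigid motions via the Poincar\'e--Korn inequality of \cite{ChambolleContiFrancfort2016} to get $L^p_\loc$ compactness of $\tilde u_k-a_k$ and a.e.\ convergence of $u_{k}-a_k$, read off item~(2) directly from Properties~\ref{prop:app2}, \ref{prop:app-u}, \ref{prop:app-psif}, \ref{prop:app-lp} of Theorem~\ref{th:app}, and obtain item~(3) from weak convergence of $e(u_k)$ (forced by Property~\ref{prop:app3}) plus convexity of $f_0$ and a limiting argument in $t$. The ``subtlety'' you flag (fixed outer cube vs.\ diagonalization over $t_h\uparrow 1$) you resolve the same way the paper does, namely by running the construction directly in $Q_1$; the detour through setting $\cost_k=0$, $\beta_k=1$ in Theorem~\ref{t:cm} is unnecessary packaging but not incorrect, since you carefully avoid invoking the minimality conclusions that would require $\Psi_0\to 0$.
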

\begin{proof}
For large $h$ we can apply Theorem \ref{th:app} to obtain  $\tilde u_k\in GSBD(Q_1)$  from $u_k$.
By Properties \ref{prop:app1}. and \ref{prop:app3}.
	 up to a subsequence and a rigid motion
	$a_k$ the sequence $\tilde u_k$ converges in $L^p_\loc(Q_1)$
	to a limit $u\in W^{1,p}(Q_1)$. Moreover $(u_k-a_k)\chi_{Q_1\setminus \tilde \omega_k}$
	converges also to $u$ in $L^p_\loc(Q_1)$ by Theorem \ref{th:app}, Property~\ref{prop:app-u}. Since $|\tilde \omega_k|\to0$
	we can deduce that $u_k-a_k$ converges to $u$ in measure and then up to subsequences also almost everywhere in $Q_1$.

	The assertion in 2.~directly follows by Theorem \ref{th:app}, Properties \ref{prop:app2}., \ref{prop:app-u}., \ref{prop:app-psif}., {and \ref{prop:app-lp}}.
Let now $0<{t}'<{t}<1$.  Then $e(\tilde u_k)\weakto e(u)$ in $L^p(Q_{t})$, 
	and using convexity of $f_0$ and then
  Property~\ref{prop:app3}.~of Theorem \ref{th:app} we obtain
	\[
	\int_{Q_{{t}'}} f_0(e(u)) dx\le \liminf_k \int_{Q_{{t}'}} f_0(e(\tilde u_k)) dx
	\le \liminf_k \int_{Q_{t}} f_0(e(u_k)) dx.
	\]
	By continuity of the left-hand side we deduce first the case ${t}'={t}$ and then the case ${t}=1$.
\end{proof}

\section{{Existence of Griffith's minimizers in dimension \boldmath{$n$}}}\label{s:exis}
The following statements follow from Theorem \ref{t:cm} with exactly the same proof as \cite[{Lemma 3.8 and Corollary 3.9}]{ContiFocardiIurlano_exist} respectively.

{The following density lower bound is explicitly stated for $p=2$.
In that case, it is well-known that solutions of $\Div\C e(u)=0$ are
smooth and can be computed using a ($\C$-dependent) kernel
which is $(2-{n})$-essentially homogeneous (see for
instance~\cite[\S~6.2, Thm.~6.2.1]{Morrey-Multiple}). It follows that (for a
solution defined in the unit ball) 
\[
\|e(u)\|_{L^\infty(B_{1/2})} \le c \|e(u)\|_{L^2(B_1)}
\]
for some $c$ depending only on $\C$ and thus, for $\rho\le 1/2$,
\[
\int_{B_\rho} f_0(e(u))dx \le c\rho^n \int_{B_1} f_0(e(u)) dx.
\]
Therefore, one can reproduce the proof
of~\cite[Lemma~3.7]{ContiFocardiIurlano_exist}
with almost no change, and obtain the following result.
}

\begin{lemma}[Density lower bound for $G_0$]\label{l:dens}
	Let $p=2$,
        $\cost\ge 0$, and $\beta>0$. 
	If $u\in GSBD^2(\Omega)$ is a local minimizer of
	$G(\cdot,\cost,\beta,\Omega)$ defined in \eqref{e:G}, then there exist $\vartheta_0$ 
	and $R_0$, depending only on ${n}$, $\mathbb{C}$, $\cost$, $\beta$, $\parametermu$, and 
	$\|g\|_{L^\infty(\Omega)}$, such that if $0<\rho<R_0$, $x\in\overline{J_u}$, and 
	$B_\rho(x)\subset\hskip-0.125cm\subset\Omega$, then 
	\be{\label{e:lb}
		G_0(u,\cost,\beta,B_\rho(x))\geq \vartheta_0\rho^{n-1}.}
	Moreover
	\be{\label{e:saltochiuso}
		\calH^{n-1}(\Omega\cap(\overline{J_u}\setminus J_u))=0.
	}
\end{lemma}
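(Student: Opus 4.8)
The plan is to prove the density lower bound for $G_0$ by a now-standard contradiction/blow-up argument, following the scheme of De Giorgi--Carriero--Leaci as adapted in \cite{ContiFocardiIurlano_exist}, with Theorem~\ref{t:cm} supplying the crucial compactness-and-minimality step. First I would reduce to the homogeneous functional $G_0$: since $u$ is a local minimizer of $G(\cdot,\cost,\beta,\Omega)$ with $g\in L^\infty$ and $f_\mu\le f_0$ up to controlled lower-order terms, one shows (exactly as in \cite[Lemma~3.7]{ContiFocardiIurlano_exist}) that $u$ is a \emph{quasi-minimizer} of $G_0$ on small balls, i.e. $\Psi_0(u,\cost,\beta,B_\rho(x))\le C\rho^n$ for $B_\rho(x)\subset\subset\Omega$, with $C$ depending on $\C$, $\cost$, $\beta$, $\mu$, and $\|g\|_\infty$. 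This is where the hypothesis $p=2$ and the quoted elliptic regularity for $\Div\C e(u)=0$ enters: the decay estimate $\int_{B_\rho}f_0(e(u))\le c\rho^n\int_{B_1}f_0(e(u))$ for $\C$-harmonic fields is the replacement for the scalar harmonic decay used in the Mumford--Shah theory.

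Next I would argue by contradiction. Suppose \eqref{e:lb} fails: then there exist points $x_k\in\overline{J_u}$ and radii $\rho_k\to 0$ with $B_{\rho_k}(x_k)\subset\subset\Omega$ and
\[
\varepsilon_k:=\frac{G_0(u,\cost,\beta,B_{\rho_k}(x_k))}{\rho_k^{n-1}}\to 0.
\]
Rescale: set $u_k(y):=\rho_k^{-1/2}u(x_k+\rho_k y)$ on $Q_1$ (or $B_1$), $\cost_k:=\cost\rho_k$, $\beta_k:=\beta$. Then $\sup_k G_0(u_k,\cost_k,\beta_k,Q_1)$ is bounded (this is essentially $\varepsilon_k$ plus a fixed geometric constant coming from the energy upper bound valid at any minimizer), $\cost_k\to 0$, $\calH^{n-1}(J_{u_k}\cap Q_1)\to 0$, and the deviation from minimality $\Psi_0(u_k,\cost_k,\beta_k,Q_1)$ scales like $C\rho_k\to 0$. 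Thus the hypotheses of Theorem~\ref{t:cm} are met, and we obtain a subsequence, affine rigid motions $a_k$, and a limit $u_\infty\in W^{1,p}(Q_1)$ (here $p=2$) which minimizes $\int f_0(e(\cdot))$ among competitors sharing its boundary values, with $u_k-a_k\to u_\infty$ a.e. and the energy convergence of Property~\ref{propconvenerg-itrho}.

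Then I would extract the contradiction in the usual two steps. On the one hand, $0\in\overline{J_{u_k}}$ for each $k$ (after translation $x_k\mapsto 0$), so a density-type argument together with the a.e.\ convergence $u_k-a_k\to u_\infty$ and the energy convergence forces, via the minimality and the decay estimate for $\C$-harmonic maps, that the rescaled total energy in $Q_{1/2}$ decays like a fixed power of the radius — contradicting the normalization $G_0(u_k,\cdot)\approx \rho_k^{n-1}\cdot(\text{const})$ unless the const is $0$; more precisely one shows that for any $\tau<1$ small, $\limsup_k G_0(u_k,\cost_k,\beta_k,Q_\tau)/\tau^{n-1}$ is as small as we like, which iterated along dyadic scales forces $x_k$ eventually out of $\overline{J_u}$, a contradiction. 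This is the classical ``wrong-energy-decay implies no jump'' mechanism; the only new ingredient is that the limit equation is the linear elasticity system rather than the Laplacian, handled by the quoted Morrey estimate. Finally \eqref{e:saltochiuso} follows from \eqref{e:lb} by a standard density argument: on $\overline{J_u}\setminus J_u$ the upper $(n-1)$-density of $\calH^{n-1}\restr J_u$ is zero $\calH^{n-1}$-a.e., while \eqref{e:lb} combined with $\calH^{n-1}(J_u)<\infty$ and a Vitali covering forces a positive lower density on $\overline{J_u}$ up to an $\calH^{n-1}$-null set, so $\calH^{n-1}(\overline{J_u}\setminus J_u)=0$. The main obstacle is making the blow-up limit rigorous — specifically checking that the deviation from minimality $\Psi_0$ of the \emph{rescaled} functions indeed goes to zero (it does, because $u$ is a genuine local minimizer of $G$, so the only defect comes from the $\mu$- and $g$-dependent lower-order terms, which scale subcritically) — and then invoking Theorem~\ref{t:cm}; everything downstream is the classical DGCL iteration transcribed to the $GSBD$/linear-elasticity setting, which is why the authors can assert it follows ``with exactly the same proof'' as \cite[Lemma~3.8]{ContiFocardiIurlano_exist}.
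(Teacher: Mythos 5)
Your proposal follows exactly the route the paper takes: the paper proves nothing here directly, but points to \cite[Lemma~3.7/3.8]{ContiFocardiIurlano_exist} and notes that (i) Theorem~\ref{t:cm} supplies the compactness-and-minimality step in the blow-up, and (ii) for $p=2$ the Morrey-type decay $\int_{B_\rho}f_0(e(w))\,dx\le c\rho^n\int_{B_1}f_0(e(w))\,dx$ for $\C$-harmonic fields replaces the scalar harmonic decay -- precisely the two ingredients you identify. The only slip is the rescaling of the fidelity coefficient: with $u_k(y)=\rho_k^{-1/2}u(x_k+\rho_k y)$ and $p=2$ one needs $\cost_k=\cost\rho_k^2$ rather than $\cost\rho_k$ (immaterial for the conclusion, since $\cost_k\to 0$ either way).
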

\begin{corollary}[{Density lower bound for the jump}]
	Let $p=2$, $\cost\ge 0$, and $\beta>0$. 
	If $u\in GSBD^2(\Omega)$ is a local minimizer of
	$G(\cdot,\cost,\beta,\Omega)$ defined in \eqref{e:G}, then there exist 
	$\vartheta_1$ and $R_1$, depending only on {$n$}, $\mathbb{C}$, $\cost$, $\beta$, 
	$\parametermu$, and $\|g\|_{L^\infty(\Omega)}$, 
	such that if $0<\rho<R_1$, 
	$x\in\overline{J_u}$, and $B_\rho(x)\subset\hskip-0.125cm\subset\Omega$, then 
	\be{\label{e:dlbjump}
		\calH^{n-1}(J_u\cap B_\rho(x))\geq \vartheta_1\rho^{n-1}.}
\end{corollary}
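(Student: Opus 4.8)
The plan is to derive this jump density estimate from the density lower bound for $G_0$ (Lemma~\ref{l:dens}) by a blow-up argument, parallel to \cite[Corollary~3.9]{ContiFocardiIurlano_exist}. The ingredients I would combine are: the bound $G_0(u,\cost,\beta,B_\rho(x))\ge\vartheta_0\rho^{n-1}$ of Lemma~\ref{l:dens}, valid at \emph{every} scale $\rho<R_0$ around each $x\in\overline{J_u}$; the complementary upper bound $G_0(u,\cost,\beta,Q_\rho(x))\le c\rho^{n-1}$, which for a local minimizer of $G$ follows from the competitor replacing $u$ by a rigid motion inside $Q_\rho(x)$ (the new jump on $\partial Q_\rho(x)$ costs $c\rho^{n-1}$, and for $p=2$ one has $f_\parametermu=f_0$ with the fidelity contribution of order $\rho^n$); the convergence and minimality of blow-up limits, Theorem~\ref{t:cm}; and the decay $\int_{B_\rho}f_0(e(w))\,dx\le c\rho^n\int_{B_1}f_0(e(w))\,dx$ for solutions of $\Div\C e(w)=0$ recalled just before Lemma~\ref{l:dens}.

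I would argue by contradiction. If no pair $(\vartheta_1,R_1)$ as in the statement exists, then choosing $\vartheta_1=R_1=1/j$ produces $x_j\in\overline{J_u}$ and $\rho_j\downarrow 0$ with $B_{\rho_j}(x_j)\subset\subset\Omega$ and $\rho_j^{1-n}\Hn(J_u\cap B_{\rho_j}(x_j))\to0$. Set $\sigma_j:=\rho_j/\sqrt n$, so that $Q_{\sigma_j}(x_j)\subset B_{\rho_j}(x_j)\subset\subset\Omega$, and consider the rescalings $v_j(y):=\sigma_j^{-1/2}u(x_j+\sigma_j y)$ on $Q_1$, together with $\cost_j:=\cost\sigma_j^2\to0$. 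A direct computation gives $G_0(u,\cost,\beta,B_{t\sigma_j}(x_j))=\sigma_j^{n-1}G_0(v_j,\cost_j,\beta,B_t)$, and likewise for cubes, so $v_j\in GSBD^2(Q_1)$, $\Hn(J_{v_j})\to0$, and $\sup_j G_0(v_j,\cost_j,\beta,Q_1)<\infty$ by the upper bound. The key bookkeeping step is that the local minimality of $u$ for $G$ turns into $\Psi_0(v_j,\cost_j,\beta,Q_1)\to0$: after rescaling, the gap between the inhomogeneous $G$ (with $f_\parametermu$, fidelity $\cost|\cdot-g|^2$ and parameter $\cost$) and the homogeneous $G_0$ consists only of lower-order terms of relative size $O(\sigma_j)$, controlled using $p=2$ (hence $f_\parametermu=f_0$) and $g\in L^\infty$; this is exactly the scaling analysis already behind Lemma~\ref{l:dens} (see \cite[Lemma~3.8]{ContiFocardiIurlano_exist}), which I would quote. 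Theorem~\ref{t:cm} then applies: along a subsequence there are $w\in W^{1,2}(Q_1;\R^n)$ and an affine $\bar a$ with $e(\bar a)=0$ such that $w$ minimizes $\int f_0(e(\cdot))$ subject to its own boundary values (Property~\ref{propconvenerg-itmin}.), hence is a weak solution of $\Div\C e(w)=0$; moreover $\cost_j^{1/2}v_j\weakto\bar a$, $e(v_j)\weakto e(w)$ in $L^2(Q_t)$, and $G_0(v_j,\cost_j,\beta,Q_t)\to\int_{Q_t}f_0(e(w))\,dx+\int_{Q_t}|\bar a|^2\,dx$ for every $t<1$ (Property~\ref{propconvenerg-itrho}.).

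To conclude I would play the lower bound against the elliptic decay. Applying Lemma~\ref{l:dens} at the scales $t\sigma_j$, $t\in(0,1]$ — legitimate since $B_{t\sigma_j}(x_j)\subset B_{\rho_j}(x_j)\subset\subset\Omega$ and $t\sigma_j<R_0$ for large $j$ — and rescaling yields $G_0(v_j,\cost_j,\beta,B_t)\ge\vartheta_0 t^{n-1}$ for all $t\in(0,1]$ and large $j$. Combining the convergence of $G_0(v_j,\cost_j,\beta,Q_t)$ with lower semicontinuity of the elastic and fidelity terms on $Q_t\setminus B_t$ gives $\int_{B_t}f_0(e(w))\,dx+\int_{B_t}|\bar a|^2\,dx\ge\vartheta_0 t^{n-1}$ for every $t\in(0,1)$. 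On the other hand, since $p=2$, $w$ is smooth and the cited decay gives $\int_{B_t}f_0(e(w))\,dx\le c\,t^n$ for $t\le 1/2$, while $\int_{B_t}|\bar a|^2\,dx\le c\,t^n$ because $\bar a$ is affine. Hence $\vartheta_0 t^{n-1}\le c\,t^n$, i.e.\ $\vartheta_0\le c\,t$ for all small $t>0$; letting $t\downarrow 0$ contradicts $\vartheta_0>0$. This yields the existence of $(\vartheta_1,R_1)$; passing from the cubes $Q_{\sigma_j}$ used in the blow-up back to the balls $B_\rho(x)$ of the statement only changes dimensional constants, and $\vartheta_1,R_1$ inherit their dependence on $n,\C,\cost,\beta,\parametermu,\|g\|_{L^\infty(\Omega)}$ from Lemma~\ref{l:dens} and the decay constant.

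I expect the main obstacle to be the bookkeeping in the rescaling: checking that the $v_j$ genuinely meet the hypotheses of Theorem~\ref{t:cm}, namely bounded $G_0$-energy and vanishing deviation from $G_0$-minimality $\Psi_0(v_j,\cost_j,\beta,Q_1)\to0$. Both reduce to the scaling estimates already performed for Lemma~\ref{l:dens}/\cite[Lemma~3.8]{ContiFocardiIurlano_exist}, so in a write-up this amounts to invoking that argument rather than redoing it; once the blow-up sequence sits inside the framework of Theorem~\ref{t:cm}, the remaining steps are a short, standard comparison of scales.
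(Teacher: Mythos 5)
Your argument is correct and matches the route the paper intends. The paper itself does not spell out a proof here — it simply remarks that the Corollary ``follows from Theorem~\ref{t:cm} with exactly the same proof as [Corollary 3.9]{ContiFocardiIurlano\_exist}'' — and what you have written is essentially that argument: a contradiction/blow-up at a point of $\overline{J_u}$ with small jump density, using the upper bound $G_0(u,\cost,\beta,Q_\rho(x))\le c\rho^{n-1}$ from comparison with a rigid motion, the rescaling $v_j$ with $\cost_j=\cost\sigma_j^2$ so that Theorem~\ref{t:cm} applies (bounded $G_0$, $\Hn(J_{v_j})\to0$, $\Psi_0\to0$), the lower bound $G_0(v_j,\cost_j,\beta,B_t)\ge\vartheta_0 t^{n-1}$ inherited from Lemma~\ref{l:dens}, lower semicontinuity to pass this to the limit $w$, and finally the $O(t^n)$ elliptic decay for $\Div\C e(w)=0$ together with $\int_{B_t}|\bar a|^2\,dx\le c\,t^n$ to reach $\vartheta_0\le c\,t$, which fails as $t\downarrow 0$. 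You also correctly identify that the only delicate bookkeeping (verifying $\Psi_0(v_j,\cost_j,\beta,Q_1)\to0$, using $p=2$ so that $f_\mu=f_0$ and $g\in L^\infty$ to control the fidelity cross-terms) is the same estimate used for Lemma~\ref{l:dens} and can be quoted rather than redone.
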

{Theorem \ref{t:mainreg}} {follows from Lemma \ref{l:dens}, see \cite[Theorem 1.2]{ContiFocardiIurlano_exist} for the {details of the} proof.}
{Theorem \ref{t:main} follows from Theorem \ref{t:mainreg}  and the existence of weak global solutions proven in 
 \cite[Theorem 11.3]{gbd}.}

\section*{Acknowledgments} 
F. Iurlano has been supported by the program FSMP and the project Emergence Sorbonne-Université ANIS.
{S. Conti has been partially supported by the Deutsche Forschungsgemeinschaft through the Sonderforschungsbereich 1060 {\sl ``The mathematics of emergent effects''}.}
{The authors wish to thank G. Friesecke for useful references.}

\bibliographystyle{siam}
\bibliography{biblio}

\end{document}